\documentclass[a4paper,11pt,twoside,reqno]{amsart}
\usepackage[utf8]{inputenc}
\usepackage[T1]{fontenc}
\usepackage{amsmath,amssymb,amsthm,mathtools,mathrsfs}
\usepackage[margin=1in]{geometry}
\usepackage[plainpages=false,pdfpagelabels=true]{hyperref}
\usepackage{enumitem}
\newtheorem{theorem}{Theorem}[section]
\newtheorem{proposition}[theorem]{Proposition}

\newtheorem{corollary}[theorem]{Corollary}
\theoremstyle{definition}
\newtheorem{definition}[theorem]{Definition}
\newtheorem{example}[theorem]{Example}
\newtheorem{hypothesis}[theorem]{Hypothesis}
\theoremstyle{remark}
\newtheorem{remark}[theorem]{Remark}

\newenvironment{Thm}{\begin{theorem}}{\end{theorem}}
\newenvironment{Prop}{\begin{proposition}}{\end{proposition}}

\newenvironment{Bem}{\begin{remark}}{\end{remark}}

\newtheorem{problem}[theorem]{Problem}

\title{Response Geometry for Einstein Metrics}

\author{Anna Siffert}
\address{Universität M\"unster, Mathematisches Institut\\
Einsteinstr. 62\\
48149 M\"unster\\
Germany}
\email{asiffert@uni-muenster.de}
\subjclass[2020]{53C25; 58E11; 58E20; 35J57}
\keywords{Einstein metrics; cohomogeneity one;
response geometry;
rigidity}

\begin{document}
\begin{abstract}
We develop a response geometry for smooth parameterized families of Einstein
metrics equipped with canonical probe data. The differential of the probe
observations defines a response bundle morphism; pulling back a fixed metric
on the observation bundle gives a positive-semidefinite response tensor whose
kernel is exactly the space of first-order invisible parameter directions.
On the complete-response locus this tensor is Riemannian and yields local
rigidity and quantitative reconstruction estimates whenever the response is
locally realized by an observation map.

We study rank-defect loci, the associated quotient geometry, Gram and
conditioning operators, response volume, and covariant variation formulas for
simple response eigenvalues. Under an explicit realizability hypothesis we
also show that a single scalar observation can be chosen to detect every
direction of a finite-dimensional parameter space. These constructions are
applied to the harmonic-probe package arising from the Einstein Detection
Principle. A finite-dimensional interval-matrix example illustrates the
quantitative formulas, while marked unit-volume flat two-tori provide a fully
explicit model: three harmonic-energy observations reconstruct the marked
metric globally, the induced response metric admits an explicit positively
curved hyperboloid realization, and its conditioning deteriorates toward the cusp.
\end{abstract}

\maketitle
%%%%%%%%%%%%%%%%%%%%%%%%%%%%%%%%%%%%%%%%%%%%%%%%%%%%%%%%%%%%%%%%%%%%%%%%%%%%%%%%
\section{Introduction}
\label{sec:introduction}
%%%%%%%%%%%%%%%%%%%%%%%%%%%%%%%%%%%%%%%%%%%%%%%%%%%%%%%%%%%%%%%%%%%%%%%%%%%%%%%%
%%%%%%%%%%%%%%%%%%%%%%%%%%%%%%%%%%%%%%%%%%%%%%%%%%%%%%%%%%%%%%%%%%%%%%%%%%%%%%%%
\subsection{From parameterized Einstein families to response geometry}
%%%%%%%%%%%%%%%%%%%%%%%%%%%%%%%%%%%%%%%%%%%%%%%%%%%%%%%%%%%%%%%%%%%%%%%%%%%%%%%%
This paper develops a differential geometry associated with finite-dimensional
probe observations of Einstein metrics. Rather than studying the ambient
class of all Einstein metrics, we work with a chosen smooth
finite-dimensional parameterized family of normalized Einstein metrics and
investigate the geometric structures induced on its parameter manifold by the
corresponding probe responses.

More precisely, let \(S\) be a smooth finite-dimensional manifold whose
points parameterize normalized Einstein metrics. No smooth structure is
assumed on the ambient class of Einstein metrics or on any corresponding
moduli space. Instead, all constructions are carried out on the parameter
manifold \(S\). This point of view encompasses local deformation families,
finite-dimensional moduli families whenever they exist, and many other smooth
parameterizations arising naturally in the study of Einstein metrics.

Throughout the paper we assume that the chosen parameterized family is
equipped with finite-dimensional response data satisfying the standing
hypotheses stated in the standing hypotheses introduced above. In particular, each
parameter \(p\in S\) is associated with a finite-dimensional observation
space
\(\mathcal Z_p\)
and a linear response operator
\(R_p:T_pS\longrightarrow\mathcal Z_p,\)
describing the first-order variation of the prescribed probe observations
under infinitesimal variations of the chosen parameterized family.

The analytical origin of these pointwise response operators is provided by
the Einstein Detection Principle developed in \cite{EDP}. The present paper
does not reprove that detection theory. Instead, it takes the resulting
pointwise response data as input and asks what differential geometry can be
constructed once the additional smooth bundle-packaging and fibre-metric
hypotheses stated below are imposed. In particular, smoothness of the
observation bundle and of the bundle morphism $R$ is an explicit hypothesis
of the present work and is not inferred from \cite{EDP}.

The central question addressed in this paper is therefore the following.
\begin{quote}
\emph{What differential-geometric structures, relative to a fixed probe
package, are naturally induced on the parameter manifold by the associated
response operators?}
\end{quote}
The viewpoint adopted here shifts the emphasis from detection to geometry.
Rather than viewing the response operators merely as tools for detecting or
distinguishing nearby Einstein metrics, we investigate the geometry that they
induce on the parameter manifold itself. As in many classical constructions
of differential geometry, the resulting tensorial structures can be studied
independently of the analytical procedure by which they were obtained.

The principal contribution of the paper is to establish the foundations of
this response geometry. Assuming that the pointwise response operators admit
a smooth bundle-theoretic organization, we introduce the associated response
bundle, define the response metric, develop the observable quotient geometry,
and establish corresponding notions of infinitesimal observability, response
rigidity and spectral observability.

%%%%%%%%%%%%%%%%%%%%%%%%%%%%%%%%%%%%%%%%%%%%%%%%%%%%%%%%%%%%%%%%%%%%%%%%%%%%%%%%
\subsection{Relation to established observability geometries}
%%%%%%%%%%%%%%%%%%%%%%%%%%%%%%%%%%%%%%%%%%%%%%%%%%%%%%%%%%%%%%%%%%%%%%%%%%%%%%%%
The elementary mechanism behind a response metric---pulling back a metric from
an observation space by a parameter-to-observation map---is classical and is
not claimed as new here.  In statistics and information geometry, Fisher-type
metrics measure local distinguishability of parameterized probability models;
in nonlinear least squares, the manifold of model predictions carries the
metric induced from data space; and in systems and control, observability
Gramians quantify sensitivity of measured outputs to state perturbations; see,
for example, \cite{AmariNagaoka,BatesWatts,TranstrumMachtaSethna,Vaidya}.
These theories provide important conceptual precedents for using a positive
semidefinite Gram tensor and its spectrum to encode observability and
conditioning.

The contribution of the present paper is therefore deliberately narrower.  We
specialize this general geometric idea to smooth parameterized families of
normalized Einstein metrics equipped with the finite-dimensional probe data of
\cite{EDP}; formulate the resulting response bundle and observable quotient
geometry without assuming that an ambient Einstein moduli space is smooth;
and develop the corresponding constant-rank, spectral, covariant-variation,
and scalar-probe constructions in that setting.  The minimal scalar probe theorem is a finite-dimensional consequence of the
stated functional-analytic surjectivity hypothesis, rather than a claim of a
new general measurement principle.  Likewise, the Cauchy--Binet and singular-value formulas used later
are standard linear algebra.  The marked flat-torus section is included as an
exact model of this Einstein response framework: the classical geometry of
marked flat tori is not new, while the explicitly chosen three-channel
response metric, its reconstruction formula, conditioning analysis, and
hyperboloid realization are the conclusions specific to that observation
system.

%%%%%%%%%%%%%%%%%%%%%%%%%%%%%%%%%%%%%%%%%%%%%%%%%%%%%%%%%%%%%%%%%%%%%%%%%%%%%%%%
\subsection{The response metric}
%%%%%%%%%%%%%%%%%%%%%%%%%%%%%%%%%%%%%%%%%%%%%%%%%%%%%%%%%%%%%%%%%%%%%%%%%%%%%%%%
The central geometric object of the theory is the \emph{response metric}.
Under the standing hypotheses, the pointwise response operators assemble into
a smooth vector bundle morphism
$$R:TS\longrightarrow\mathcal Z,$$
where \(\mathcal Z\) denotes the observation bundle. After fixing a smooth
positive-definite fibre metric
\(h^{\mathcal Z},\)
on the observation bundle, we define the response metric by the pullback
formula
$$g_R=R^{*}h^{\mathcal Z}.$$
Equivalently,
$$(g_R)_p(X,Y) = h^{\mathcal Z}_p(R_pX,R_pY), \qquad X,Y\in T_pS.$$
Thus \(g_R\) is simply the Gram tensor determined by the response vectors of
the chosen probe package.

Geometrically, the response metric measures first-order distinguishability
through the prescribed probe observations. Its kernel consists precisely of
those infinitesimal parameter variations that are invisible to the chosen
probe package. Consequently, \(g_R\) is generally positive semidefinite
rather than positive definite. On regions where the response bundle morphism
has constant rank it descends canonically to a Riemannian metric on the
observable quotient bundle, while on the complete-response locus it becomes
Riemannian itself.

The response metric serves as the organizing object of the present paper.
Observable quotient geometry, local rigidity, spectral observability,
covariant variation formulas and minimal scalar probe families all arise
naturally from this single construction.
%%%%%%%%%%%%%%%%%%%%%%%%%%%%%%%%%%%%%%%%%%%%%%%%%%%%%%%%%%%%%%%%%%%%%%%%%%%%%%%%
\subsection{Main results}

The construction has four principal consequences. First, the response tensor
is precisely the Gram form of the response operator, so its null directions
are the infinitesimally invisible parameter variations and it is Riemannian
exactly on the complete-response locus. Second, when the response is locally
the differential of an observation map, complete response implies local
identifiability and a local Lipschitz reconstruction estimate. Third, the Gram
operator packages quantitative observability through its spectrum,
conditioning, determinant, and covariant variation. Fourth, under the stated
realizability hypothesis, finite-dimensional complete response can be
compressed to one scalar probe without losing first-order detection.

The general formalism is complemented by two explicit illustrations. A
validated interval-matrix calculation exhibits the finite-dimensional
spectral estimates, and the marked flat-torus model gives exact global
reconstruction and an explicit complete response metric with non-uniform
conditioning.

\subsection{Parameterized Einstein families}
%%%%%%%%%%%%%%%%%%%%%%%%%%%%%%%%%%%%%%%%%%%%%%%%%%%%%%%%%%%%%%%%%%%%%%%%%%%%%%%%
Throughout the paper let \(M\) be a fixed smooth manifold.

The basic object of the present paper is not the ambient space of all
Einstein metrics but a chosen smooth family of normalized Einstein metrics.
\begin{definition}[Parameterized Einstein family]
A \emph{parameterized Einstein family} on \(M\) consists of
\begin{enumerate}
\item
a smooth finite-dimensional manifold \(S\);
\item
a family of Riemannian metrics
$\{g_p\}_{p\in S}$
on \(M\), depending smoothly on the parameter \(p\), such that every
\(g_p\) is a normalized Einstein metric.
\end{enumerate}
\end{definition}

Here smooth dependence means that, in local coordinates on
\(S\times M\), the coefficients of the metric tensor depend smoothly on
both the parameter and the spatial variables.

No smooth structure is assumed on the ambient class of normalized Einstein
metrics. In particular, the parameter manifold \(S\) is not required to be a
local moduli space, nor to contain all nearby Einstein metrics. It is simply
part of the chosen data.
Consequently, every tangent vector
$X\in T_pS$
represents an infinitesimal variation of the chosen parameterized Einstein
family. No assertion is made that every infinitesimal Einstein deformation
arises in this way.

\begin{remark}
This framework includes local deformation families, finite-dimensional
moduli families whenever these exist, and any other smooth
finite-dimensional parameterization of normalized Einstein metrics.

In cohomogeneity-one Einstein problems one often first constructs a smooth
parameter space of local Einstein germs or propagated local solutions.
Whenever a smooth subfamily of globally defined normalized Einstein metrics
is obtained, the present theory applies directly to that family.
\end{remark}
%%%%%%%%%%%%%%%%%%%%%%%%%%%%%%%%%%%%%%%%%%%%%%%%%%%%%%%%%%%%%%%%%%%%%%%%%%%%%%%%
\subsection{Pointwise response data}
%%%%%%%%%%%%%%%%%%%%%%%%%%%%%%%%%%%%%%%%%%%%%%%%%%%%%%%%%%%%%%%%%%%%%%%%%%%%%%%%
The Einstein Detection Principle developed in \cite{EDP} associates finite-dimensional
response data with parameterized Einstein families satisfying its hypotheses.

Accordingly, throughout the present paper we assume that the chosen
parameterized Einstein family is equipped with pointwise response operators
\(R_p:T_pS\longrightarrow\mathcal Z_p,\)
where \(\mathcal Z_p\) denotes a finite-dimensional observation space.
These pointwise response operators constitute the analytical input for the
response geometry developed below.

The results of \cite{EDP} establish the existence of the pointwise response
operators under the hypotheses stated there. It does not assert that the
observation spaces assemble into a vector bundle or that the response
operators vary smoothly with the parameter. Those additional global
assumptions are imposed explicitly in the present paper.
%%%%%%%%%%%%%%%%%%%%%%%%%%%%%%%%%%%%%%%%%%%%%%%%%%%%%%%%%%%%%%%%%%%%%%%%%%%%%%%%
\subsection{Standing hypotheses}
%%%%%%%%%%%%%%%%%%%%%%%%%%%%%%%%%%%%%%%%%%%%%%%%%%%%%%%%%%%%%%%%%%%%%%%%%%%%%%%%
Throughout the remainder of the paper we assume the following.
\begin{enumerate}
\item
The observation spaces have locally constant dimension and assemble into a
smooth vector bundle
$\pi_{\mathcal Z}:\mathcal Z\longrightarrow S.$
\item
The observation bundle carries a smooth positive-definite fibre metric
$h^{\mathcal Z}.$
\item
The pointwise response operators vary smoothly with the parameter and
assemble into a smooth vector bundle morphism
$
R:TS\longrightarrow\mathcal Z.$
\item
Whenever equivariance under isometries is required, the chosen
parameterized Einstein family, the observation bundle and the response
bundle morphism are assumed to be compatible with the corresponding
symmetry.
\item
Whenever kernel bundles, image bundles or quotient bundles are used,
the corresponding constant-rank hypotheses are imposed explicitly.
\end{enumerate}
Unless stated otherwise, all manifolds, vector bundles, bundle morphisms,
fibre metrics and connections are assumed to be smooth.

The response geometry developed in this paper is therefore attached to the
chosen parameterized Einstein family together with the prescribed probe
package and observation metric. It is not an intrinsic invariant of the
Einstein equations alone, but of these geometric data.

\subsection*{Acknowledgements}
The author made extensive use of Mathematica and Sage.

\smallskip

AI is evolving at a rapid pace. 
When I started this project, AI was primarily an editing tool and useful for literature searches. Now it is capable of doing so much more:
The author conceived and developed the mathematical direction of the project, formulated the principal questions and connections investigated in the paper, and developed the mathematical arguments and proofs. OpenAI's ChatGPT was used extensively as an interactive mathematical assistant to develop and refine proof arguments, explore alternative approaches, test potential counterexamples, identify hidden assumptions and possible gaps, perform repeated mathematical consistency audits, and assist with the organization and exposition of the manuscript. The author reviewed the resulting arguments and takes full responsibility for the mathematical content and conclusions of the paper.

%%%%%%%%%%%%%%%%%%%%%%%%%%%%%%%%%%%%%%%%%%%%%%%%%%%%%%%%%%%%%%%%%%%%%%%%%%%%%%%%
\section{The Response Bundle}
\label{sec:response-bundle}
%%%%%%%%%%%%%%%%%%%%%%%%%%%%%%%%%%%%%%%%%%%%%%%%%%%%%%%%%%%%%%%%%%%%%%%%%%%%%%%%
Throughout this section let
\((S,\{g_p\}_{p\in S})\)
be a parameterized Einstein family satisfying the standing hypotheses introduced
above. The response bundle morphism provides the basic geometric object from
which all subsequent constructions are derived.

The primary object of the theory is the smooth vector bundle morphism
\(R:TS\to\mathcal Z\). Using the canonical identification
\(\operatorname{Hom}(TS,\mathcal Z)\cong T^*S\otimes\mathcal Z\), this
morphism may equivalently be regarded as a smooth section of the response
bundle \(\operatorname{Hom}(TS,\mathcal Z)\).
We shall freely pass between these two equivalent viewpoints.
No global observation map is assumed. Whenever there exists a smooth
observation map whose differential identifies with the response bundle
morphism, we simply write
\(R=d\Theta.\)
The precise hypotheses under which such an observation map exists are stated
later when local rigidity is discussed.

%%%%%%%%%%%%%%%%%%%%%%%%%%%%%%%%%%%%%%%%%%%%%%%%%%%%%%%%%%%%%%%%%%%%%%
\subsection{The response bundle}
%%%%%%%%%%%%%%%%%%%%%%%%%%%%%%%%%%%%%%%%%%%%%%%%%%%%%%%%%%%%%%%%%%%%%%%%%%%%%%%%
Let
\(\pi_{\mathcal Z}:\mathcal Z\longrightarrow S\)
denote the observation bundle.
\begin{definition}
The \emph{response bundle} is the vector bundle
\[
\mathscr R
:=
\operatorname{Hom}(TS,\mathcal Z).
\]
\end{definition}
Its fibre over \(p\in S\) is
\(\mathscr R_p = \operatorname{Hom}(T_pS,\mathcal Z_p).\)
Since
\(\operatorname{Hom}(V,W) \cong V^*\otimes W,\)
there is a canonical vector bundle isomorphism
$$\mathscr R \cong T^*S\otimes\mathcal Z.$$
In particular, the response bundle has finite rank given by
\((\dim S)(\operatorname{rank}\mathcal Z).\)
The response bundle morphism
\(R:TS\longrightarrow\mathcal Z\)
therefore determines canonically a smooth section
\(R\in \Gamma(\mathscr R),\)
which we call the \emph{response section}.
%%%%%%%%%%%%%%%%%%%%%%%%%%%%%%%%%%%%%%%%%%%%%%%%%%%%%%%%%%%%%%%%%%%%%%%%%%%%%%%%
\subsection{Smoothness and naturality}
%%%%%%%%%%%%%%%%%%%%%%%%%%%%%%%%%%%%%%%%%%%%%%%%%%%%%%%%%%%%%%%%%%%%%%%%%%%%%%%%
By the standing hypotheses, the pointwise response operators vary smoothly
with the parameter and assemble into the bundle morphism
\(R:TS\longrightarrow\mathcal Z.\)
Equivalently,
\(R\in\Gamma(\mathscr R).\)
This smooth bundle packaging is an explicit assumption of the present paper;
it is not asserted by the results of \cite{EDP} alone.
Whenever equivariance under isometries is required, it is assumed that the
chosen parameterized Einstein family, the observation bundle and the
response bundle morphism are compatible with the corresponding symmetry.
Every construction developed below depends only on these objects together
with fibrewise linear algebra and is therefore natural with respect to the
assumed symmetry.
%%%%%%%%%%%%%%%%%%%%%%%%%%%%%%%%%%%%%%%%%%%%%%%%%%%%%%%%%%%%%%%%%%%%%%%%%%%%%%%%
\subsection{Invisible and observable bundles}
%%%%%%%%%%%%%%%%%%%%%%%%%%%%%%%%%%%%%%%%%%%%%%%%%%%%%%%%%%%%%%%%%%%%%%%%%%%%%%%%
Assume that the response bundle morphism
\(R:TS\longrightarrow\mathcal Z\)
has constant rank on an open subset
\(U\subset S.\)
Its kernel consists of infinitesimal parameter variations that are invisible
to the chosen probe package, whereas its image consists of all realizable
first-order observations.

\begin{definition}
The vector subbundle
\(\mathcal K = \ker R \subset TS|_U\)
is called the \emph{invisible bundle}.
The vector subbundle
\(\mathcal I = \operatorname{Im}R \subset \mathcal Z|_U\)
is called the \emph{observable bundle}.
\end{definition}

\begin{proposition}
\label{prop:observable-bundles}
If the response bundle morphism has constant rank \(r\), then
\begin{enumerate}
\item
\(\mathcal K\) and \(\mathcal I\) are smooth vector bundles;
\item
\[
\operatorname{rank}\mathcal K
=
\dim S-r,
\qquad
\operatorname{rank}\mathcal I
=
r;
\]
\item
the response bundle morphism induces a canonical vector bundle isomorphism
\(TS|_U/\mathcal K \cong \mathcal I.\)
\end{enumerate}
\end{proposition}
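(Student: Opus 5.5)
The plan is to reduce to a local normal form for constant-rank bundle morphisms and then globalize by uniqueness of the resulting subbundles. Fix \(p_0\in U\) and choose local frames \(e_1,\dots,e_n\) of \(TS\) (\(n=\dim S\)) and \(f_1,\dots,f_m\) of \(\mathcal Z\) on a neighbourhood \(V\subset U\) of \(p_0\). In these frames \(R\) is a smooth matrix-valued function \(A(p)\) of size \(m\times n\) and constant rank \(r\).

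\emph{Smoothness of the image.} Reorder the frames so that some \(r\times r\) minor of \(A(p_0)\) is nonzero. After shrinking \(V\), this minor is nonzero on all of \(V\). The corresponding sections \(Re_1,\dots,Re_r\) are then smooth and pointwise linearly independent. Since \(\operatorname{rank}R_p=r\), they span \(\operatorname{Im}R_p\) at every \(p\in V\). They therefore form a smooth local frame of \(\mathcal I\), so \(\mathcal I\) is a smooth rank-\(r\) subbundle of \(\mathcal Z|_U\).

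\emph{Smoothness of the kernel.} Use the fibre metric \(h^{\mathcal Z}\) to form the smooth orthogonal projection \(P\) onto \(\mathcal I\). Complete \(Re_1,\dots,Re_r\) to a local frame of \(\mathcal Z\) and let \(\pi\) be the resulting smooth projection \(\mathcal Z|_V\to\mathcal I|_V\). For each \(j>r\) set
\[
k_j=e_j-\sum_{i\le r}c_{ij}e_i,
\]
where the smooth coefficients \(c_{ij}\) are defined by \(Re_j=\sum_{i\le r}c_{ij}Re_i\). These coefficients exist and are smooth because \(Re_j\in\mathcal I\) and the \(Re_i\), \(i\le r\), form a smooth frame of \(\mathcal I\); concretely, they are obtained by inverting the nonvanishing \(r\times r\) minor via Cramer's rule. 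The sections \(k_{r+1},\dots,k_n\) lie in \(\ker R\) and are linearly independent, since their \(e_j\)-components are the identity block. Their number \(n-r\) equals \(\dim\ker R_p\) by rank–nullity, so they form a smooth frame of \(\mathcal K\). The local frames patch together because \(\mathcal K_p\) and \(\mathcal I_p\) are intrinsically defined subspaces. This proves (1), and (2) is just rank–nullity fibrewise.

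\emph{The isomorphism.} \(TS|_U/\mathcal K\) is a smooth quotient bundle, because \(\mathcal K\) is a smooth subbundle. \(R\) maps \(TS|_U\) onto \(\mathcal I\) and vanishes on \(\mathcal K\), so it descends to a bundle map \(\bar R:TS|_U/\mathcal K\to\mathcal I\). Each \(\bar R_p\) is a linear isomorphism by the first isomorphism theorem. \(\bar R\) is smooth: in the local frame \([e_1],\dots,[e_r]\) of the quotient it sends \([e_i]\mapsto Re_i\), which is the identity matrix relative to the chosen frame of \(\mathcal I\). A smooth bundle map that is fibrewise bijective has a smooth inverse. The map is canonical because it is determined by \(R\) alone, with no choice of frames or metric.

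The only genuinely nontrivial step is the smoothness of the kernel, namely producing smooth local sections spanning \(\ker R\). The lower semicontinuity of rank plus the constant-rank assumption handled by the minor/Cramer argument above is where the hypothesis is really used; without constant rank, both (1) and (3) fail.
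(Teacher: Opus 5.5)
Your proof is correct and follows the paper's route. The paper simply invokes the standard constant-rank theorem for smooth bundle morphisms together with the fibrewise first isomorphism theorem, and your minor/Cramer's-rule construction of local frames for $\mathcal I$ and $\mathcal K$, combined with the local frame criterion for subbundles, is the usual proof of that cited theorem. The only blemish is that the orthogonal projection $P$ and the projection $\pi$ you introduce in the kernel step are never used and can be deleted.
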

\begin{proof}
The statement is the standard constant-rank theorem for smooth vector bundle
morphisms. The quotient identification is the fibrewise first isomorphism
theorem.
\end{proof}
%%%%%%%%%%%%%%%%%%%%%%%%%%%%%%%%%%%%%%%%%%%%%%%%%%%%%%%%%%%%%%%%%%%%%%%%%%%%%%%%
\subsection{The complete-response locus}
%%%%%%%%%%%%%%%%%%%%%%%%%%%%%%%%%%%%%%%%%%%%%%%%%%%%%%%%%%%%%%%%%%%%%%%%%%%%%%%%
The response bundle morphism is said to be \emph{complete} at a parameter if
it detects every infinitesimal variation of the chosen parameterized
Einstein family.
\begin{definition}
The \emph{complete-response locus} is
\[
S_{\mathrm{comp}}
=
\{
p\in S:
\ker R_p=\{0\}
\}.
\]
Its complement
\(S_{\mathrm{deg}} = S\setminus S_{\mathrm{comp}}\)
is called the \emph{response-degeneracy locus}.
\end{definition}
\begin{proposition}
\label{prop:openness-complete-response-locus}
The complete-response locus is open.
\end{proposition}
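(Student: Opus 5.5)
The plan is to show that injectivity of $R_p$ is an open condition, using either lower semicontinuity of rank or positivity of the response Gram form. First I would fix $p_0\in S_{\mathrm{comp}}$ and work locally. I choose a coordinate chart around $p_0$, giving a local frame $\partial_1,\dots,\partial_n$ of $TS$ with $n=\dim S$. Using standing hypothesis (1), I also choose a local trivializing frame $e_1,\dots,e_m$ of $\mathcal Z$ on a common open neighbourhood $V$ of $p_0$. Standing hypothesis (3) says $R$ is a smooth bundle morphism. So in these frames $R_p$ is represented by an $m\times n$ matrix $A(p)$ whose entries depend smoothly, and in particular continuously, on $p\in V$. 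The condition $\ker R_p=\{0\}$ is equivalent to $A(p)$ having rank $n$; this forces $m\ge n$.

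Next I would use the standard minor criterion. Since $A(p_0)$ has rank $n$, some $n\times n$ minor $\Delta(p)=\det A_I(p)$ (rows indexed by $I$) satisfies $\Delta(p_0)\neq 0$. The function $\Delta$ is a polynomial in the entries of $A$, hence continuous on $V$. So $W=\{p\in V:\Delta(p)\neq0\}$ is an open neighbourhood of $p_0$, and for every $p\in W$ the matrix $A(p)$ has rank $n$, so $R_p$ is injective. Thus $W\subset S_{\mathrm{comp}}$, and since $p_0$ was arbitrary, $S_{\mathrm{comp}}$ is open.

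A more invariant alternative, which I might mention as well, uses the fibre metric $h^{\mathcal Z}$ from standing hypothesis (2). With it, injectivity of $R_p$ is equivalent to $\det G(p)>0$, where $G(p)=A(p)^{T}H(p)A(p)$ is the Gram matrix of the response vectors and $H(p)$ is the matrix of $h^{\mathcal Z}$ in the chosen frame. Positivity of $\det G$ is an open condition by continuity. This version has the advantage of foreshadowing the response metric $g_R$.

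There is no real obstacle here. The only point needing care is that continuity of the matrix entries genuinely comes from the smoothness of $R$ as a bundle morphism, which is assumed, together with local triviality of $\mathcal Z$ and locally constant fibre dimension. Without those hypotheses, "rank is lower semicontinuous" would not even make sense.
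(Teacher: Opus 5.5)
Your proposal is correct and follows essentially the same argument as the paper: represent $R$ by a smooth matrix in local trivializations, pick an $n\times n$ minor that is nonzero at $p_0$, and use continuity to keep it nonzero nearby. The Gram-determinant variant you mention is an equivalent reformulation and is not needed.
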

\begin{proof}
Choose local trivializations of \(TS\) and \(\mathcal Z\) near a point
\(p_0\in S_{\mathrm{comp}}\). The bundle morphism is represented by a smooth
matrix-valued map. Since \(R_{p_0}\) is injective, one
\((\dim S)\times(\dim S)\) minor is non-zero. By continuity this minor
remains non-zero nearby, so injectivity persists.
\end{proof}
On the complete-response locus the invisible bundle vanishes.
Consequently,
\(TS \cong \mathcal I,\)
and
\[
S_{\mathrm{comp}}
=
\{
p\in S:
\operatorname{rank}R_p=\dim S
\}.
\]
Hence complete response is possible only if
\[
\operatorname{rank}\mathcal Z
\ge
\dim S.
\]
%%%%%%%%%%%%%%%%%%%%%%%%%%%%%%%%%%%%%%%%%%%%%%%%%%%%%%%%%%%%%%%%%%%%%%%%%%%%%%%%
\subsection{Analytic rank-defect loci}
%%%%%%%%%%%%%%%%%%%%%%%%%%%%%%%%%%%%%%%%%%%%%%%%%%%%%%%%%%%%%%%%%%%%%%%%%%%%%%%%
Assume that the parameter manifold, the observation bundle and the response
bundle morphism are real analytic.
For
\[
0\le
k
\le
\min
\{
\dim S,
\operatorname{rank}\mathcal Z
\},
\]
define
\[
\mathcal D_k(R)
=
\{
p\in S:
\operatorname{rank}R_p\le k
\}.
\]
\begin{proposition}
Each
\(\mathcal D_k(R)\)
is a closed real-analytic subset of \(S\). Locally it is defined by the
vanishing of the \((k+1)\times(k+1)\) minors of a matrix representative of
the response bundle morphism.
\end{proposition}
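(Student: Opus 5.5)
The plan is to work locally in real-analytic trivializations and then patch. Fix \(p_0\in S\) and choose a real-analytic chart \(V\ni p_0\) of \(S\) over which both \(TS\) and \(\mathcal Z\) admit real-analytic local frames. The coordinate frame \(\partial_1,\dots,\partial_n\) handles \(TS\), with \(n=\dim S\). For \(\mathcal Z\) we use a real-analytic frame \(e_1,\dots,e_m\), \(m=\operatorname{rank}\mathcal Z\), which exists because \(\mathcal Z\) is a real-analytic bundle and so has real-analytic local trivializations. In these frames \(R\) is represented by an \(m\times n\) matrix \(A(p)=(A_{ij}(p))\), where \(R_p\partial_j=\sum_i A_{ij}(p)e_i(p)\). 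By the analyticity hypothesis on \(R\), each entry \(A_{ij}\) is a real-analytic function on \(V\).

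The key linear-algebra step is the standard characterization of rank by minors: for a real matrix \(A\), we have \(\operatorname{rank}A\le k\) if and only if every \((k+1)\times(k+1)\) minor of \(A\) vanishes. Since \(R_p\) and \(A(p)\) have the same rank, this gives
\[
\mathcal D_k(R)\cap V=\{p\in V:\ \det A_{I,J}(p)=0 \text{ for all } I,J \text{ with } |I|=|J|=k+1\},
\]
where \(A_{I,J}\) is the submatrix with rows \(I\) and columns \(J\). Each \(\det A_{I,J}\) is a polynomial in the entries \(A_{ij}\), so it is real-analytic. Hence \(\mathcal D_k(R)\cap V\) is the common zero set of finitely many real-analytic functions, which is the second assertion. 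In the edge case \(k=\min\{n,m\}\) there are no \((k+1)\)-minors and \(\mathcal D_k(R)=S\), which is trivially closed and analytic.

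Two points remain. The first is independence of the choice of frames: a change of trivialization replaces \(A\) by \(B(p)A(p)C(p)\) with invertible real-analytic \(B,C\). This leaves the rank unchanged, so the set \(\mathcal D_k(R)\cap V\) does not depend on the choices. By Cauchy--Binet, the new minors are analytic combinations of the old ones, and conversely, so the local defining ideals agree on overlaps. The second is closedness: on each \(V\) the set is a zero set of continuous functions and hence relatively closed, and the \(V\) cover \(S\). Being locally the zero set of finitely many analytic functions is exactly what it means to be a closed real-analytic subset. Equivalently, closedness follows from lower semicontinuity of \(p\mapsto\operatorname{rank}R_p\), as in the proof of Proposition~\ref{prop:openness-complete-response-locus}.

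The only mildly delicate point is the existence of real-analytic local frames for \(\mathcal Z\) and the compatibility on overlaps. I would handle this by taking the real-analytic structure of \(\mathcal Z\) to mean exactly that its transition functions are real-analytic.
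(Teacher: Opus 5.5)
Your proposal is correct and follows the same route as the paper: in real-analytic local trivializations the response matrix has analytic entries, \(\operatorname{rank}R_p\le k\) is equivalent to the vanishing of all \((k+1)\times(k+1)\) minors, and a set that is locally the common zero set of finitely many analytic functions near every point of \(S\) is a closed real-analytic subset. Your additional checks are correct details that the paper leaves implicit: frame-independence via the generalized Cauchy--Binet formula, the trivial case \(k=\min\{\dim S,\operatorname{rank}\mathcal Z\}\), and closedness via local closedness or lower semicontinuity of rank.
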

\begin{proof}
In analytic local trivializations the entries of the response matrix are
analytic functions. The statement therefore follows from the standard
characterization of matrix rank by minors together with the local theory of
real-analytic sets; see, for example, \cite{KrantzParks}.
\end{proof}
Without additional transversality assumptions the rank-defect loci may have
singularities and need not be smooth submanifolds.
%%%%%%%%%%%%%%%%%%%%%%%%%%%%%%%%%%%%%%%%%%%%%%%%%%%%%%%%%%%%%%%%%%%%%%%%%%%%%%%%
\section{The Response Metric}
\label{sec:response-metric}
%%%%%%%%%%%%%%%%%%%%%%%%%%%%%%%%%%%%%%%%%%%%%%%%%%%%%%%%%%%%%%%%%%%%%%%%%%%%%%%%
The response bundle morphism
\(R:TS\to\mathcal Z\)
distinguishes invisible directions \(\ker R\) from observable first-order
variations. Once the observation bundle is equipped with its fixed
positive-definite fibre metric \(h^{\mathcal Z}\), this information pulls back
to the parameter manifold. The resulting geometry is relative to the chosen
probe package and observation metric; it is not an intrinsic metric of the
Einstein equations alone.

Throughout this section, \((S,\{g_p\}_{p\in S})\) satisfies the standing
hypotheses of the standing hypotheses introduced above.
%%%%%%%%%%%%%%%%%%%%%%%%%%%%%%%%%%%%%%%%%%%%%%%%%%%%%%%%%%%%%%%%%%%%%%%%%%%%%%%%
\subsection{Definition}
%%%%%%%%%%%%%%%%%%%%%%%%%%%%%%%%%%%%%%%%%%%%%%%%%%%%%%%%%%%%%%%%%%%%%%%%%%%%%%%%
\begin{definition}[Response metric]
\label{def:response-metric}
The \emph{response metric} is the smooth symmetric covariant \(2\)-tensor
\(g_R=R^*h^{\mathcal Z}\), i.e.
\((g_R)_p(X,Y)=h^{\mathcal Z}_p(R_pX,R_pY)\).
Equivalently, \((g_R)_p\) is the Gram form of
\(R_p:T_pS\to\mathcal Z_p\).
\end{definition}
%%%%%%%%%%%%%%%%%%%%%%%%%%%%%%%%%%%%%%%%%%%%%%%%%%%%%%%%%%%%%%%%%%%%%%%%%%%%%%%%
\subsection{Elementary properties}
%%%%%%%%%%%%%%%%%%%%%%%%%%%%%%%%%%%%%%%%%%%%%%%%%%%%%%%%%%%%%%%%%%%%%%%%%%%%%%%%
The response metric is the fundamental geometric object associated with the
chosen response bundle morphism.
\begin{proposition}
\label{prop:elementary-response-metric-properties}
The response metric is a smooth symmetric positive-semidefinite covariant
\(2\)-tensor.
Moreover,
\(\ker(g_R)_p = \ker R_p,\)
for every parameter \(p\in S\).
\end{proposition}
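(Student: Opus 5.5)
The plan is to verify each property fibrewise from the definition \((g_R)_p(X,Y)=h^{\mathcal Z}_p(R_pX,R_pY)\), and to check smoothness in local trivializations.

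\emph{Smoothness.} Choose a local frame \(e_1,\dots,e_n\) of \(TS\) and a local frame \(f_1,\dots,f_m\) of \(\mathcal Z\) over an open set \(U\). By the standing hypotheses, \(R\) is represented by a smooth matrix \(A(p)\) with \(R_pe_i=\sum_a A_{ai}(p)f_a\), and \(h^{\mathcal Z}\) by a smooth positive-definite matrix \(H(p)\). Then
\[
(g_R)_p(e_i,e_j)=\bigl(A(p)^{T}H(p)A(p)\bigr)_{ij}.
\]
These coefficients are smooth, so \(g_R\) is a smooth covariant \(2\)-tensor. It is tensorial, that is \(C^\infty\)-bilinear, because \(R_p\) is linear and \(h^{\mathcal Z}_p\) is bilinear.

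\emph{Symmetry and semidefiniteness.} Symmetry follows directly from the symmetry of \(h^{\mathcal Z}_p\). For positive semidefiniteness, note that \((g_R)_p(X,X)=h^{\mathcal Z}_p(R_pX,R_pX)=|R_pX|^2_{h}\ge 0\).

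\emph{Kernel identity.} Here \(\ker(g_R)_p\) means the radical \(\{X:(g_R)_p(X,Y)=0\ \forall Y\}\).
\begin{itemize}
\item If \(R_pX=0\), then \((g_R)_p(X,Y)=h^{\mathcal Z}_p(0,R_pY)=0\) for all \(Y\), so \(\ker R_p\subset\ker(g_R)_p\).
\item Conversely, if \(X\) lies in the radical, take \(Y=X\). This gives \(h^{\mathcal Z}_p(R_pX,R_pX)=0\), and positive-definiteness of \(h^{\mathcal Z}_p\) forces \(R_pX=0\).
\end{itemize}
For a positive-semidefinite form, the radical also coincides with the null cone \(\{X:(g_R)_p(X,X)=0\}\), by the Cauchy--Schwarz inequality for semidefinite forms. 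So either reading of ``kernel'' gives the same set.

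There is no real obstacle in this argument. The only point requiring care is to state which notion of kernel is meant and to use the positive-definiteness of \(h^{\mathcal Z}\), rather than mere semidefiniteness, in the reverse inclusion.
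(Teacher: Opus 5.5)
Your proof is correct and follows essentially the same route as the paper. Smoothness and symmetry are inherited from \(R\) and \(h^{\mathcal Z}\), semidefiniteness comes from \((g_R)_p(X,X)=\|R_pX\|^2_{h^{\mathcal Z}}\), and the kernel identity comes from positive-definiteness of \(h^{\mathcal Z}\). Your choice \(Y=X\) spells out the step the paper states tersely, and the local-frame computation and the radical-versus-null-cone remark are correct extra detail.
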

\begin{proof}
Smoothness and symmetry follow from those of \(R\) and
\(h^{\mathcal Z}\). Moreover
\[
(g_R)_p(X,X)=\|R_pX\|_{h^{\mathcal Z}}^2\ge0,
\]
so \(g_R\) is positive semidefinite. Since \(h^{\mathcal Z}\) is positive
definite, \((g_R)_p(X,Y)=0\) for every \(Y\) if and only if \(R_pX=0\).
Hence \(\ker(g_R)_p=\ker R_p\).
\end{proof}
The kernel identity shows that the response metric measures infinitesimal
parameter variations only through their observable first-order responses.
Invisible parameter variations are precisely the null directions of the
response metric.
%%%%%%%%%%%%%%%%%%%%%%%%%%%%%%%%%%%%%%%%%%%%%%%%%%%%%%%%%%%%%%%%%%%%%%%%%%%%%%%%
\subsection{Complete-response geometry}
%%%%%%%%%%%%%%%%%%%%%%%%%%%%%%%%%%%%%%%%%%%%%%%%%%%%%%%%%%%%%%%%%%%%%%%%%%%%%%%%
Recall that
\(S_{\mathrm{comp}}=\{p\in S:R_p\text{ is injective}\}\), an open subset of
\(S\).

\begin{theorem}
\label{thm:complete-response-riemannian-metric}
The restriction
\(g_R|_{S_{\mathrm{comp}}}\)
is a smooth Riemannian metric.

Equivalently, for every parameter
\(p\),
the following are equivalent:
\begin{enumerate}
\item
\(p\in S_{\mathrm{comp}}\);
\item
\(R_p\) is injective;
\item
\((g_R)_p\) is positive definite.
\end{enumerate}
\end{theorem}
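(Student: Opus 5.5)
The plan is to prove the pointwise equivalence of (1)--(3) first, and then deduce the Riemannian statement from it together with Proposition~\ref{prop:elementary-response-metric-properties} and Proposition~\ref{prop:openness-complete-response-locus}.

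\textbf{Step 1: the equivalences.} The equivalence of (1) and (2) holds by the definition of \(S_{\mathrm{comp}}\), which is the set where \(\ker R_p=\{0\}\). For (2)\(\Rightarrow\)(3), take \(X\in T_pS\) with \(X\neq0\). Injectivity gives \(R_pX\neq0\), and positive definiteness of \(h^{\mathcal Z}_p\) then gives
\[
(g_R)_p(X,X)=\|R_pX\|^2_{h^{\mathcal Z}}>0 .
\]
For (3)\(\Rightarrow\)(2), suppose \(R_pX=0\). Then \((g_R)_p(X,X)=0\), so \(X=0\) by positive definiteness. Alternatively, one can invoke the kernel identity \(\ker(g_R)_p=\ker R_p\) from Proposition~\ref{prop:elementary-response-metric-properties}. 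Either route needs only two lines.

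\textbf{Step 2: the Riemannian metric.} Proposition~\ref{prop:elementary-response-metric-properties} already shows that \(g_R\) is a smooth symmetric covariant \(2\)-tensor on all of \(S\). Its restriction to the subset \(S_{\mathrm{comp}}\), which is open by Proposition~\ref{prop:openness-complete-response-locus}, is therefore a smooth symmetric \(2\)-tensor field on an open submanifold. By Step~1 it is positive definite at every point of \(S_{\mathrm{comp}}\), which is exactly the definition of a Riemannian metric.

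\textbf{Main obstacle.} There is no substantive difficulty here. The only point needing care is that openness of \(S_{\mathrm{comp}}\) is what makes "smooth Riemannian metric on \(S_{\mathrm{comp}}\)" meaningful, since the restriction must live on a manifold. This openness is supplied by Proposition~\ref{prop:openness-complete-response-locus}. As a consistency check, it also follows from the equivalence just proved: positive definiteness of the continuously varying form \(g_R\) is an open condition, because \(\det\) of its matrix in a local frame stays positive nearby.
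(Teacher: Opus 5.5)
Your proof is correct and is essentially the paper's argument: both reduce the equivalences to the identity \((g_R)_p(X,X)=\|R_pX\|^2_{h^{\mathcal Z}}\) together with positive definiteness of \(h^{\mathcal Z}\), and take smoothness from Proposition~\ref{prop:elementary-response-metric-properties}. Your explicit appeal to the openness of \(S_{\mathrm{comp}}\) (Proposition~\ref{prop:openness-complete-response-locus}) makes precise a point the paper leaves implicit. Your determinant-based consistency check is also valid, because \(g_R\) is positive semidefinite everywhere, so \(\det>0\) in a local frame is equivalent to positive definiteness.
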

\begin{proof}
By Proposition~\ref{prop:elementary-response-metric-properties},
\((g_R)_p(X,X)=\|R_pX\|^2\). Thus \((g_R)_p\) is positive definite exactly
when \(R_p\) is injective; smoothness is already established.
\end{proof}

Thus the response metric becomes an honest Riemannian metric precisely when
the chosen probe package detects every infinitesimal variation of the
parameterized Einstein family.
%%%%%%%%%%%%%%%%%%%%%%%%%%%%%%%%%%%%%%%%%%%%%%%%%%%%%%%%%%%%%%%%%%%%%%%%%%%%%%%%
\subsection{Observable quotient geometry}
%%%%%%%%%%%%%%%%%%%%%%%%%%%%%%%%%%%%%%%%%%%%%%%%%%%%%%%%%%%%%%%%%%%%%%%%%%%%%%%%
Suppose the response bundle morphism has constant rank on an open subset
\(U\subset S.\)
Then the invisible bundle
\(\mathcal K = \ker R\)
and observable bundle
\(\mathcal I = \operatorname{Im}R\)
are smooth vector bundles.

The response metric therefore descends canonically to the observable
quotient.
\begin{proposition}
\label{prop:observable-quotient-metric}
There exists a unique smooth positive-definite fibre metric
\(\overline g_R\)
on
\(TS|_U/\mathcal K\)
such that
\(q^{*}\overline g_R = g_R,\)
where
\(q:TS|_U \longrightarrow TS|_U/\mathcal K\)
is the quotient projection.

Moreover,
the induced bundle isomorphism
\(\overline R: TS|_U/\mathcal K \longrightarrow \mathcal I\)
is an isometry with respect to the restricted observation metric.
\end{proposition}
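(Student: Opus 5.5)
The plan is to define $\overline g_R$ fibrewise, check it is well defined and positive definite, and then get smoothness from local frames adapted to $\mathcal K$. By Proposition~\ref{prop:observable-bundles}, on $U$ the subbundles $\mathcal K=\ker R$ and $\mathcal I=\operatorname{Im}R$ are smooth of ranks $\dim S-r$ and $r$. Moreover, $R$ induces a canonical bundle isomorphism $\overline R:TS|_U/\mathcal K\to\mathcal I$ with $\overline R\circ q=R$.

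\emph{Definition and well-definedness.} For $p\in U$ we set
\[
(\overline g_R)_p([X],[Y]) := h^{\mathcal Z}_p(R_pX,R_pY) = h^{\mathcal Z}_p(\overline R_p[X],\overline R_p[Y]).
\]
This does not depend on representatives, because $R_p$ vanishes on $\mathcal K_p$. By Proposition~\ref{prop:elementary-response-metric-properties}, this is the same as the statement that $g_R(X+K,Y+K')=g_R(X,Y)$ whenever $K,K'\in\ker(g_R)_p$.

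\emph{Basic properties.} The identity $q^*\overline g_R=g_R$ holds by construction. Uniqueness is immediate, since $q$ is fibrewise surjective. For positive definiteness, suppose $(\overline g_R)_p([X],[X])=\|R_pX\|^2=0$. Then $X\in\ker R_p=\mathcal K_p$, so $[X]=0$. The isometry claim is just the displayed formula: $\overline g_R=\overline R^{*}(h^{\mathcal Z}|_{\mathcal I})$, and $h^{\mathcal Z}|_{\mathcal I}$ is a smooth positive-definite fibre metric on the smooth subbundle $\mathcal I$.

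\emph{Smoothness.} This is the only step that needs any care. Near a point of $U$, we choose a smooth local frame $e_1,\dots,e_n$ of $TS$ such that $e_{r+1},\dots,e_n$ span $\mathcal K$. Such a frame exists because $\mathcal K$ is a smooth subbundle: extend a local frame of $\mathcal K$ by smooth sections that are complementary at the base point, and shrink the neighbourhood. The classes $[e_1],\dots,[e_r]$ then form a smooth local frame of the quotient bundle, and
\[
\overline g_R([e_i],[e_j]) = h^{\mathcal Z}(Re_i,Re_j).
\]
These coefficients are smooth because $R$, $h^{\mathcal Z}$ and the $e_i$ are smooth. Alternatively, smoothness follows at once from the formula $\overline g_R=\overline R^{*}(h^{\mathcal Z}|_{\mathcal I})$, since $\overline R$ is a smooth bundle isomorphism.
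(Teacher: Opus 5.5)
Your proof is correct and complete. The paper gives no proof of this proposition; it treats it as immediate from Proposition~\ref{prop:observable-bundles}. The fibrewise formula \(\overline g_R([X],[Y])=h^{\mathcal Z}(RX,RY)\) that you use is the same one the paper uses later in the proof of Proposition~\ref{prop:constant-rank-observation}. Your smoothness argument, via an adapted local frame or equivalently via \(\overline g_R=\overline R^{*}(h^{\mathcal Z}|_{\mathcal I})\), supplies the one detail the paper leaves implicit.
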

%%%%%%%%%%%%%%%%%%%%%%%%%%%%%%%%%%%%%%%%%%%%%%%%%%%%%%%%%%%%%%%%%%%%%%%%%%%%%%%%
\subsection{Local rigidity and reconstruction}
%%%%%%%%%%%%%%%%%%%%%%%%%%%%%%%%%%%%%%%%%%%%%%%%%%%%%%%%%%%%%%%%%%%%%%%%%%%%%%%%
The response bundle morphism need not arise from a global observation map.
When it does, positivity of the response metric has strong geometric
consequences.
Suppose \(U\subset S\) is open, \((N,h^N)\) is Riemannian,
\(\Theta:U\to N\) is smooth, and there is a vector-bundle isomorphism
\(\Psi:\mathcal Z|_U\to\Theta^*TN\) satisfying \(\Psi\circ R=d\Theta\).
No compatibility between \(h^{\mathcal Z}\) and \(h^N\) is needed below:
fibrewise invertibility of \(\Psi\) gives
\(\ker d\Theta_q=\ker R_q\). If \(\Psi\) is additionally fibrewise
isometric, then \(g_R=\Theta^*h^N\).
\begin{theorem}[Local response rigidity]
\label{thm:response-rigidity}
Suppose that
\((g_R)_p\)
is positive definite at some parameter
\(p\in U.\)
Then there exists a neighbourhood
\(U_0\subset U\)
of \(p\) on which
\(\Theta\)
is a smooth embedding.
In particular, nearby members of the chosen parameterized Einstein family are
uniquely determined by their observation data.
\end{theorem}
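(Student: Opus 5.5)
The plan is to reduce the statement to the classical fact that an injective immersion which is a homeomorphism onto its image is an embedding, using the inverse-function/constant-rank theorem. First I transfer positivity of \((g_R)_p\) to injectivity of \(d\Theta_p\). By Theorem~\ref{thm:complete-response-riemannian-metric}, positive definiteness of \((g_R)_p\) is equivalent to injectivity of \(R_p\). Since \(\Psi\) is a fibrewise isomorphism with \(\Psi\circ R=d\Theta\), we get \(\ker d\Theta_p=\ker R_p=\{0\}\), so \(d\Theta_p:T_pS\to T_{\Theta(p)}N\) is injective. By Proposition~\ref{prop:openness-complete-response-locus}, injectivity of \(R_q\), and hence of \(d\Theta_q\), persists on an open neighbourhood \(U_1\subset U\) of \(p\). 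Thus \(\Theta|_{U_1}\) is an immersion.

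Next I apply the local immersion theorem at \(p\). With \(n=\dim S\) and \(m=\dim N\), injectivity gives \(m\ge n\), and there are coordinates \(x\) centred at \(p\) and \(y\) centred at \(\Theta(p)\) in which \(\Theta(x)=(x,0)\). Concretely, choose a complementary subspace \(W\subset T_{\Theta(p)}N\) to \(d\Theta_p(T_pS)\) and consider \(F(x,w)=\exp_{\Theta(x)}(\tau_x w)\) (or, more simply, work in a chart of \(N\) and use \(F(x,w)=\Theta(x)+w\) with \(w\in W\)). Then \(dF_{(p,0)}\) is an isomorphism, and the inverse function theorem gives a diffeomorphism \(F\) from a neighbourhood \(V\times B\) onto an open set \(O\subset N\). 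Take \(U_0\subset V\) to be a relatively compact open neighbourhood of \(p\). On \(U_0\), the map \(\Theta=F(\cdot,0)\) is injective, and \(\Theta(U_0)=F(U_0\times\{0\})\) carries the subspace topology from \(O\), hence from \(N\), since \(O\) is open. So \(\Theta|_{U_0}\) is a homeomorphism onto its image, and being an immersion, it is a smooth embedding.

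The only real subtlety, and the step I would check carefully, is the topological-embedding part. Injectivity alone does not suffice, and the slice-chart construction above is what guarantees that the image topology agrees with the subspace topology. The final assertion then follows formally: if \(q,q'\in U_0\) have equal observation data \(\Theta(q)=\Theta(q')\), injectivity gives \(q=q'\), so the parameterized Einstein metrics \(g_q\) near \(g_p\) are determined by their observations. No compatibility between \(h^{\mathcal Z}\) and \(h^N\) is needed anywhere, because the argument uses only the kernel identity.
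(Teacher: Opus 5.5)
Your proof is correct and follows the same route as the paper: positive definiteness of \((g_R)_p\) gives injectivity of \(R_p\), the fibrewise isomorphism \(\Psi_p\) transfers this to \(d\Theta_p\), and the local immersion theorem then yields a neighbourhood on which \(\Theta\) is a smooth embedding. The only difference is that you spell out the slice-chart construction behind the local immersion theorem, which the paper simply cites. Your extra appeals to openness of the complete-response locus and to relative compactness of \(U_0\) are harmless but not needed.
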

\begin{proof}
Positive definiteness of \((g_R)_p\) is equivalent to injectivity of
\(R_p\). Since
\[
d\Theta_p=\Psi_p\circ R_p
\]
and \(\Psi_p\) is an isomorphism, \(d\Theta_p\) is injective. By the local
immersion theorem, after shrinking to a neighbourhood of \(p\), the map
\(\Theta\) is a smooth embedding onto an embedded submanifold of \(N\).
\end{proof}
The preceding theorem is entirely local. It establishes uniqueness only
within the chosen parameterized Einstein family.
%%%%%%%%%%%%%%%%%%%%%%%%%%%%%%%%%%%%%%%%%%%%%%%%%%%%%%%%%%%%%%%%%%%%%%%%%%%%%%%%
\subsection{Quantitative reconstruction}
%%%%%%%%%%%%%%%%%%%%%%%%%%%%%%%%%%%%%%%%%%%%%%%%%%%%%%%%%%%%%%%%%%%%%%%%%%%%%%%%
The embedding theorem immediately yields a quantitative reconstruction
estimate.
Fix auxiliary Riemannian metrics on
\(S\)
and
\(N,\)
and let
\(d_S, \qquad d_N\)
denote the corresponding distance functions.
\begin{theorem}
\label{thm:reconstruction-estimate}
Under the hypotheses of
Theorem~\ref{thm:response-rigidity},
there exist a neighbourhood
\(U_0\)
of \(p\) and a constant
\(C>0\)
such that
\[
d_S(p_1,p_2)
\le
C
\,d_N
\!\left(
\Theta(p_1),
\Theta(p_2)
\right)
\]
for all
\(p_1,p_2\in U_0.\)
\end{theorem}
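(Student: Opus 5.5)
The plan is to reduce the estimate to a local computation in charts, where it becomes a lower bound for the differential of \(\Theta\) combined with a mean-value argument. The inverse of an embedding restricted to a compact piece is Lipschitz with respect to the induced metrics, and on small sets the auxiliary metrics on \(S\) and \(N\) are uniformly comparable to Euclidean metrics in charts.

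\textbf{Step 1: chart-level injectivity bound.} By Theorem~\ref{thm:response-rigidity}, \(d\Theta_p=\Psi_p\circ R_p\) is injective. Choose coordinate charts \(x\) around \(p\) in \(S\) and \(y\) around \(\Theta(p)\) in \(N\), and write \(F=y\circ\Theta\circ x^{-1}\) near \(x(p)=0\). Injectivity of \(DF(0)\) gives \(\lambda>0\) with \(|DF(0)v|\ge 2\lambda|v|\). By continuity of \(DF\), there is a closed Euclidean ball \(B\) around \(0\) on which \(\|DF(z)-DF(0)\|\le\lambda\). For \(a,b\in B\),
\[
F(a)-F(b)=\int_0^1 DF(b+t(a-b))(a-b)\,dt
=DF(0)(a-b)+E,
\]
with \(|E|\le\lambda|a-b|\). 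Hence \(|F(a)-F(b)|\ge\lambda|a-b|\). Convexity of the ball is what allows the straight-segment integral.

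\textbf{Step 2: compare Riemannian and Euclidean distances.} On the compact set \(x^{-1}(B)\), the auxiliary metric on \(S\) is uniformly equivalent to the Euclidean metric in the chart. Take \(U_0=x^{-1}(B')\) with \(B'\subset B\) a smaller concentric ball. Then \(d_S(p_1,p_2)\le c_1|x(p_1)-x(p_2)|\), since the straight segment stays in \(B\) and has controlled \(d_S\)-length.

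On the target side, fix a compact coordinate ball \(K\) around \(\Theta(p)\) on which \(h\) is comparable to the Euclidean metric, and shrink \(B'\) so that \(\Theta(U_0)\) lies well inside \(K\). The main obstacle is the reverse inequality \(|y(q_1)-y(q_2)|\le c_2\,d_N(q_1,q_2)\), because a near-minimizing curve in \(N\) might leave the chart. I handle this in the standard way.
\begin{itemize}
\item If a curve from \(q_1\) to \(q_2\) stays in \(K\), its Euclidean length is at most \(c_2\) times its \(h\)-length.
\item If it leaves \(K\), its \(h\)-length is at least \(2\delta\), where \(\delta\) is the \(h\)-distance from \(\Theta(U_0)\) to \(\partial K\). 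Meanwhile \(|y(q_1)-y(q_2)|\le\operatorname{diam}\), the Euclidean diameter of \(y(\Theta(U_0))\).
\end{itemize}
So the inequality holds with \(c_2=\max\{c_2,\operatorname{diam}/(2\delta)\}\). Here \(\delta>0\) because \(\Theta(\overline{U_0})\) is compact and contained in the interior of \(K\).

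\textbf{Step 3: combine.} For \(p_1,p_2\in U_0\),
\[
d_S(p_1,p_2)\le c_1|x(p_1)-x(p_2)|\le \tfrac{c_1}{\lambda}|F(x(p_1))-F(x(p_2))|\le \tfrac{c_1c_2}{\lambda}\,d_N(\Theta(p_1),\Theta(p_2)).
\]
This gives the estimate with \(C=c_1c_2/\lambda\).
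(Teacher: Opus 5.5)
Your proposal is correct, but it reaches the estimate by a different route from the paper.

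\textbf{The paper's route.} The paper first invokes Theorem~\ref{thm:response-rigidity} to make \(\Theta:U_0\to V_0=\Theta(U_0)\) a diffeomorphism onto an embedded submanifold. It then works with the intrinsic distance \(d_{V_0}\) of the image in two steps. In small normal coordinates on \(N\), \(V_0\) is a graph with bounded slope, which gives \(d_{V_0}\le C_1 d_N\). The inverse \(\Theta^{-1}:V_0\to U_0\) has bounded differential, which gives \(d_S\le C_2 d_{V_0}\).

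\textbf{Your route.} You never use the embedding or any metric on the image. Instead you prove the lower Lipschitz bound \(|F(a)-F(b)|\ge\lambda|a-b|\) directly in charts, from \(\|DF-DF(0)\|\le\lambda\) on a convex ball. You then use chart-Euclidean distance as the common intermediary on both the parameter side and the observation side.

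\textbf{What each approach buys.} Your version has three advantages:
\begin{enumerate}
\item It is self-contained. It needs only \(C^1\) regularity and injectivity of \(d\Theta_p\), and it reproves local injectivity of \(\Theta\) as a by-product.
\item It gives an explicit constant \(C=c_1c_2/\lambda\) in terms of the injectivity modulus of the differential.
\item It explicitly handles the fact that \(d_N\) is a global distance whose near-minimizing curves may leave the target chart. The paper absorbs this point into the phrase ``sufficiently small normal coordinates'' when it asserts \(d_{V_0}\le C_1 d_N\).
\end{enumerate}
The paper's version is shorter because it reuses the embedding theorem. It also phrases the result more geometrically, as Lipschitz continuity of the reconstruction map \(\Theta^{-1}\) on the observed image.
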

\begin{proof}
By Theorem~\ref{thm:response-rigidity}, shrink \(U_0\) so that
\(\Theta:U_0\to V_0:=\Theta(U_0)\) is a diffeomorphism onto an embedded
submanifold. In sufficiently small normal coordinates around \(\Theta(p)\),
the ambient metric is uniformly Euclidean and \(V_0\) is a graph with bounded
derivative. Hence graph projection and parametrization are uniformly
bi-Lipschitz, so \(d_{V_0}\le C_1d_N\) on \(V_0\). The inverse
\(\Theta^{-1}:V_0\to U_0\) has bounded differential after shrinking again,
and therefore \(d_S(\Theta^{-1}y_1,\Theta^{-1}y_2)\le C_2d_{V_0}(y_1,y_2)\).
Combining the estimates proves the claim with \(C=C_1C_2\).
\end{proof}
Thus the observation map admits a locally Lipschitz inverse. Small errors in
the observation data produce proportionally small errors in the reconstructed
parameter.
%%%%%%%%%%%%%%%%%%%%%%%%%%%%%%%%%%%%%%%%%%%%%%%%%%%%%%%%%%%%%%%%%%%%%%%%%%%%%%%%
\subsection{Naturality and dependence on the observation metric}
%%%%%%%%%%%%%%%%%%%%%%%%%%%%%%%%%%%%%%%%%%%%%%%%%%%%%%%%%%%%%%%%%%%%%%%%%%%%%%%%
The response geometry depends on the response bundle morphism
\(R:TS\rightarrow\mathcal Z\)
together with the chosen fibre metric
\(h^{\mathcal Z}.\)
Replacing the observation metric changes the response metric by pullback.
\begin{proposition}
Let
\(\widetilde h^{\mathcal Z}\)
be another smooth positive-definite fibre metric on
\(\mathcal Z\).
Then the corresponding response metric is
\(\widetilde g_R = R^* \widetilde h^{\mathcal Z}.\)
If the two observation metrics are uniformly equivalent on a compact subset,
then the corresponding response metrics are uniformly equivalent there.
\end{proposition}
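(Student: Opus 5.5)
The first assertion is immediate from Definition~\ref{def:response-metric}, applied with \(\widetilde h^{\mathcal Z}\) in place of \(h^{\mathcal Z}\). The response bundle morphism \(R\) is unchanged, and the response metric is by definition the pullback of the chosen fibre metric along \(R\). Hence \((\widetilde g_R)_p(X,Y)=\widetilde h^{\mathcal Z}_p(R_pX,R_pY)\), which is exactly \(R^*\widetilde h^{\mathcal Z}\). By Proposition~\ref{prop:elementary-response-metric-properties}, applied to \(\widetilde h^{\mathcal Z}\), this is again a smooth positive-semidefinite tensor with kernel \(\ker R_p\). In particular, the invisible directions and the complete-response locus do not depend on the choice of observation metric.

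For the comparison statement, I would first make precise what uniform equivalence on a compact set \(K\subset S\) means. I take it to mean that there are constants \(0<c\le C\) with
\[
c\,h^{\mathcal Z}_p(z,z)\le \widetilde h^{\mathcal Z}_p(z,z)\le C\,h^{\mathcal Z}_p(z,z)
\qquad\text{for all }p\in K,\ z\in\mathcal Z_p .
\]
(If only the definition were given, such constants would exist automatically. The ratio \(\widetilde h^{\mathcal Z}(z,z)/h^{\mathcal Z}(z,z)\) is continuous and positive on the unit-sphere bundle of \(h^{\mathcal Z}\) over \(K\), which is compact, so it attains a positive minimum and a finite maximum.) Then substitute \(z=R_pX\) for \(X\in T_pS\). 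This gives
\[
c\,(g_R)_p(X,X)\le(\widetilde g_R)_p(X,X)\le C\,(g_R)_p(X,X)
\qquad\text{for all }p\in K,\ X\in T_pS .
\]
This is exactly uniform equivalence of the two response tensors on \(K\), with the same constants.

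The only point needing care is the meaning of ``uniformly equivalent'' for tensors that are merely positive semidefinite. I would state explicitly that this two-sided quadratic-form inequality is the intended notion. It is consistent with the observation that both tensors have the same kernel \(\ker R_p\). On \(K\cap S_{\mathrm{comp}}\) it reduces to the usual equivalence of Riemannian metrics by Theorem~\ref{thm:complete-response-riemannian-metric}. On a constant-rank region it yields equivalence of the descended quotient metrics \(\overline g_R\) and \(\overline{\widetilde g}_R\) of Proposition~\ref{prop:observable-quotient-metric}. Otherwise the argument is a one-line substitution, and I expect no real obstacle.
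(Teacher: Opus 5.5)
Your proposal is correct and follows the same route as the paper. The first claim is immediate from the definition, and the comparison follows by substituting $z=R_pX$ into the two-sided inequality for the observation metrics, which is exactly the paper's remark that uniform equivalence is preserved under pullback by the fixed morphism $R$. Your extra observations are correct elaborations rather than a different argument: the constants exist automatically on compact sets, and the inequality descends to the observable quotient.
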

\begin{proof}
The first statement is immediate from the definition.
Uniform equivalence is preserved under pullback by the fixed bundle
morphism
\(R.\)
\end{proof}
Consequently, qualitative notions such as complete response, invisible
directions and observable quotient geometry depend only on the response
bundle morphism, whereas quantitative measurements depend on the chosen
observation metric.
%%%%%%%%%%%%%%%%%%%%%%%%%%%%%%%%%%%%%%%%%%%%%%%%%%%%%%%%%%%%%%%%%%%%%%%%%%%%%%%%
\section{Spectral Response Geometry}
\label{sec:spectral-response-geometry}
%%%%%%%%%%%%%%%%%%%%%%%%%%%%%%%%%%%%%%%%%%%%%%%%%%%%%%%%%%%%%%%%%%%%%%%%%%%%%%%%
The response metric introduced in the previous section is the fundamental
tensor of the response geometry. Once an auxiliary Riemannian metric has been
chosen on the parameter manifold, the response metric may equivalently be
represented by a self-adjoint bundle endomorphism. This representation is
convenient for introducing quantitative observability invariants such as
eigenvalues, singular values, determinants and condition numbers.
Throughout this section let
\((S,\{g_p\}_{p\in S})\)
be a parameterized Einstein family satisfying the standing hypotheses of
the standing hypotheses introduced above. Let
\(g_R=R^*h^{\mathcal Z}\)
denote the associated response metric.
%%%%%%%%%%%%%%%%%%%%%%%%%%%%%%%%%%%%%%%%%%%%%%%%%%%%%%%%%%%%%%%%%%%%%%%%%%%%%%%%
\subsection{The response Gram operator}
%%%%%%%%%%%%%%%%%%%%%%%%%%%%%%%%%%%%%%%%%%%%%%%%%%%%%%%%%%%%%%%%%%%%%%%%%%%%%%%%
To perform spectral analysis we fix an auxiliary Riemannian metric
\(g^S\)
on the parameter manifold \(S\).

The metric \(g^S\) is not part of the response geometry itself. Its sole
purpose is to identify tangent and cotangent bundles and thereby convert the
symmetric covariant tensor \(g_R\) into a self-adjoint endomorphism. All
spectral quantities introduced below therefore depend on the pair
\((g_R,g^S)\), whereas the response metric itself and its kernel are
independent of this auxiliary choice.
\begin{definition}[Response Gram operator]
\label{def:response-gram-operator}
The \emph{response Gram operator} is the unique smooth bundle endomorphism
\(G_R\in\Gamma(\operatorname{End}(TS))\)
satisfying
\(g^S(G_RX,Y)=g_R(X,Y)\)
for all tangent vectors \(X,Y\).
Equivalently,
\(G_R=(g^S)^{-1}g_R.\)
\end{definition}
Let
\(R^\dagger:\mathcal Z\longrightarrow TS\)
denote the fibrewise adjoint of \(R\) with respect to \(g^S\) and
\(h^{\mathcal Z}\). Since
\(g_R=R^*h^{\mathcal Z},\)
one immediately obtains the factorization
\(G_R=R^\dagger R.\)
\begin{proposition}
\label{prop:gram-properties}
The response Gram operator satisfies
\begin{enumerate}
\item
\(G_R\) is smooth and self-adjoint with respect to \(g^S\);
\item
\(G_R\) is positive semidefinite;
\item
\[
\ker G_R=\ker R=\ker g_R;
\]
\item
on the complete-response locus,
\(G_R\) is positive definite.
\end{enumerate}
\end{proposition}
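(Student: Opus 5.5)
The plan is to reduce every item to the defining identity \(g^S(G_RX,Y)=g_R(X,Y)\) from Definition~\ref{def:response-gram-operator}, together with Proposition~\ref{prop:elementary-response-metric-properties} and Theorem~\ref{thm:complete-response-riemannian-metric}. No new analysis is needed; the argument is fibrewise linear algebra plus one smoothness check.

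\emph{Smoothness and self-adjointness.} In a local frame, write \(g^S\) and \(g_R\) as matrix-valued functions \(A\) and \(B\), with \(A\) smooth, symmetric and invertible, and \(B\) smooth and symmetric. Then \(G_R\) is represented by \(A^{-1}B\). Inversion is smooth on invertible matrices, so \(G_R\) is smooth. Self-adjointness follows directly from the symmetry of \(g_R\):
\[
g^S(G_RX,Y)=g_R(X,Y)=g_R(Y,X)=g^S(G_RY,X)=g^S(X,G_RY).
\]
Alternatively one can use the factorization \(G_R=R^\dagger R\), since \((R^\dagger R)^\dagger=R^\dagger R\).

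\emph{Positive semidefiniteness.} Taking \(Y=X\) gives
\[
g^S(G_RX,X)=g_R(X,X)=\|R_pX\|_{h^{\mathcal Z}}^2\ge 0.
\]

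\emph{Kernel identity.} This is the only step needing a small argument. The equality \(\ker g_R=\ker R\) is already Proposition~\ref{prop:elementary-response-metric-properties}. It remains to show \(\ker G_R=\ker g_R\). Suppose \(G_RX=0\). Then \(g_R(X,Y)=g^S(G_RX,Y)=0\) for all \(Y\), so \(X\in\ker g_R\). Conversely, suppose \(g_R(X,\cdot)=0\). Then \(g^S(G_RX,Y)=0\) for all \(Y\), and nondegeneracy of \(g^S\) forces \(G_RX=0\). As an independent check, \(G_RX=0\) implies \(0=g^S(R^\dagger RX,X)=\|RX\|^2\), which gives \(RX=0\) directly.

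\emph{Positive definiteness on the complete-response locus.} For \(p\in S_{\mathrm{comp}}\), the map \(R_p\) is injective. Hence \(g^S(G_RX,X)=\|R_pX\|^2>0\) for every \(X\neq0\), and this is the equivalence already recorded in Theorem~\ref{thm:complete-response-riemannian-metric}.

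I expect no real obstacle. The one point that needs care is the use of nondegeneracy of \(g^S\) in the kernel identity, since positivity of \(g^S\) is the only property of the auxiliary metric that enters.
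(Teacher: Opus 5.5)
Your proposal is correct and follows essentially the same route as the paper: every item is read off from the defining identity \(g^S(G_RX,Y)=g_R(X,Y)\), the symmetry and positivity of \(g_R\) (with \(g_R(X,X)=\|RX\|^2_{h^{\mathcal Z}}\)), and nondegeneracy of \(g^S\) for the kernel identity. Your local-frame argument for smoothness and your direct check \(g^S(G_RX,X)=\|R_pX\|^2>0\) only spell out steps the paper treats as immediate; the paper instead obtains definiteness from the kernel of \(G_R\) being trivial.
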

\begin{proof}
The first two assertions follow immediately from the defining identity
$$g^S(G_RX,Y)=g_R(X,Y)$$
and the symmetry and positivity of the response metric.
Furthermore,
\[
g_R(X,X)
=
h^{\mathcal Z}(RX,RX)
=
\|RX\|^2,
\]
so
\[
g_R(X,X)=0
\Longleftrightarrow
RX=0.
\]
Since
$g_R(X,Y)=g^S(G_RX,Y),$
the kernels of \(G_R\), \(R\) and \(g_R\) coincide.
Finally,
\(G_R\) is positive definite precisely when its kernel is trivial, which is
equivalent to injectivity of the response bundle morphism.
\end{proof}
%%%%%%%%%%%%%%%%%%%%%%%%%%%%%%%%%%%%%%%%%%%%%%%%%%%%%%%%%%%%%%%%%%%%%%%%%%%%%%%%
\section{Constant-Rank Geometry}
\label{sec:constant-rank-geometry}
%%%%%%%%%%%%%%%%%%%%%%%%%%%%%%%%%%%%%%%%%%%%%%%%%%%%%%%%%%%%%%%%%%%%%%%%%%%%%%%%
The response metric and the associated observable quotient geometry are
defined under constant-rank assumptions on the response bundle morphism.
When, in addition, the response bundle morphism is induced by an observation
map, the constant-rank theorem yields a geometric description of the
observable and invisible directions. The purpose of this section is to
clarify these two complementary aspects.
%%%%%%%%%%%%%%%%%%%%%%%%%%%%%%%%%%%%%%%%%%%%%%%%%%%%%%%%%%%%%%%%%%%%%%%%%%%%%%%%
\subsection{Constant-rank observation maps}
%%%%%%%%%%%%%%%%%%%%%%%%%%%%%%%%%%%%%%%%%%%%%%%%%%%%%%%%%%%%%%%%%%%%%%%%%%%%%%%%
Throughout this subsection suppose that
\(U\subset S\)
is open and that there exist
\begin{itemize}
\item
a Riemannian manifold \((N,h^N)\);
\item
a smooth observation map
\(\Theta:U\longrightarrow N;\)
\item
a vector-bundle isomorphism
\(\Psi: \mathcal Z|_U \longrightarrow \Theta^*TN\)
such that
\(\Psi\circ R=d\Theta.\)
\end{itemize}
Assume further that \(d\Theta\) has constant rank \(r\) on \(U\).
The classical constant-rank theorem (see, for example, \cite{LeeSmooth}) implies that every point
\(p\in U\) has a neighbourhood
\(U_p\subset U\)
on which the fibres and local image of \(\Theta\) have the expected smooth
structure.
\begin{proposition}
\label{prop:constant-rank-observation}
After shrinking \(U_p\) if necessary, the following statements hold.
\begin{enumerate}
\item
For every \(q\in U_p\), the level set
\((\Theta|_{U_p})^{-1}(\Theta(q))\)
is an embedded submanifold of \(U_p\) of dimension
\(\dim S-r.\)
\item
Its tangent space at \(q\) is
\[
T_q\!\left(
(\Theta|_{U_p})^{-1}(\Theta(q))
\right)
=
\ker d\Theta_q
=
\ker R_q.
\]
\item
The local image
\(\Theta(U_p)\)
is an embedded \(r\)-dimensional submanifold of \(N\).
\item
The differential induces a canonical vector-bundle isomorphism
\[
\overline{d\Theta}:
TU_p/\ker R
\longrightarrow
T\Theta(U_p),
\qquad
[X]\longmapsto d\Theta(X).
\]
\end{enumerate}
If, in addition,
\(h^{\mathcal Z} = \Psi^*(\Theta^*h^N),\)
then \(\overline{d\Theta}\) is an isometry from the observable quotient
metric \(\overline g_R\) to the Riemannian metric on
\(T\Theta(U_p)\) induced by \(h^N\).
\end{proposition}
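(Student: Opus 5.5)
The plan is to apply the constant-rank theorem in local coordinates and then read off each item from the normal form. Fix \(p\in U\). Since \(d\Theta\) has constant rank \(r\) on \(U\), the constant-rank theorem (\cite{LeeSmooth}) gives charts \(\varphi\) on a neighbourhood \(U_p\) of \(p\) and \(\phi\) on a neighbourhood of \(\Theta(p)\) in which
\[
\phi\circ\Theta\circ\varphi^{-1}(x^1,\dots,x^n)=(x^1,\dots,x^r,0,\dots,0),
\qquad n=\dim S .
\]
I will shrink \(U_p\) so that \(\varphi(U_p)\) is a product of cubes \(C_r\times C_{n-r}\). Shrinking it further will not destroy the normal form.

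Items (1) and (2) then follow directly. The level set through \(q\) is the slice \(\varphi^{-1}(\{x'(q)\}\times C_{n-r})\), with \(x'=(x^1,\dots,x^r)\). This is an embedded submanifold of dimension \(n-r\). Its tangent space is spanned by \(\partial_{r+1},\dots,\partial_n\), which is exactly \(\ker d\Theta_q\) in these coordinates. Because \(d\Theta=\Psi\circ R\) with \(\Psi\) fibrewise invertible, \(\ker d\Theta_q=\ker R_q\), as already noted before Theorem~\ref{thm:response-rigidity}.

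For item (3), the image \(\Theta(U_p)\) is \(\phi^{-1}(C_r\times\{0\})\), provided the codomain chart was chosen so that \(\phi\) is defined on a set containing this slice. This is an embedded \(r\)-dimensional submanifold. The main point of care is here: the local image must be a genuine slice and not an immersed set that accumulates on itself. The cube-shaped choice of domain together with the normal form is exactly what guarantees this, so I would shrink \(U_p\) accordingly.

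For item (4), the pointwise map \(d\Theta_q:T_qU_p\to T_{\Theta(q)}\Theta(U_p)\) is surjective, since its rank \(r\) equals the dimension of the target. Its kernel is \(\ker R_q\). The first isomorphism theorem then gives the fibrewise isomorphism \(\overline{d\Theta}\). Smoothness holds because \(d\Theta\) is smooth and \(\ker R\) is a smooth subbundle by Proposition~\ref{prop:observable-bundles}. Strictly, the quotient map is a bundle map covering \(\Theta\) into \(\Theta^*T\Theta(U_p)\), and I will read "isomorphism" in that pulled-back sense.

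For the final statement, assume \(h^{\mathcal Z}=\Psi^*(\Theta^*h^N)\). Then for \(X\in T_qS\),
\[
g_R(X,X)=h^{\mathcal Z}(RX,RX)=h^N(\Psi RX,\Psi RX)=h^N(d\Theta X,d\Theta X).
\]
By Proposition~\ref{prop:observable-quotient-metric}, \(\overline g_R([X],[X])=g_R(X,X)\). Hence \(\overline{d\Theta}\) preserves norms, and by polarization it is an isometry onto \(T\Theta(U_p)\) with the metric induced by \(h^N\).
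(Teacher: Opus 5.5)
Your proposal is correct and follows the paper's proof. Both use the constant-rank theorem for (1)--(3), $\ker d\Theta_q=\ker R_q$ via fibrewise invertibility of $\Psi$, the fibrewise first isomorphism theorem for (4), and the direct computation $h^{\mathcal Z}(RX,RY)=h^N(d\Theta X,d\Theta Y)$ for the isometry. Your explicit product-of-cubes slice chart and your reading of $\overline{d\Theta}$ as an isomorphism onto $(\Theta|_{U_p})^*T\Theta(U_p)$ covering $\Theta$ are accurate refinements of details the paper leaves implicit.
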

\begin{proof}
Since
\(d\Theta=\Psi\circ R\)
and \(\Psi\) is fibrewise invertible,
\(\ker d\Theta_q=\ker R_q\)
for every \(q\in U\).
The first three assertions are the local constant-rank theorem applied to
\(\Theta:U\longrightarrow N.\)
Because \(d\Theta\) vanishes precisely on \(\ker R\), it descends fibrewise
to a linear map
$$\overline{d\Theta}_q: T_qS/\ker R_q \longrightarrow T_{\Theta(q)}\Theta(U_p).$$
The constant-rank theorem shows that this map is an isomorphism at every
\(q\in U_p\), and the resulting family is smooth. Hence
\(\overline{d\Theta}: TU_p/\ker R \longrightarrow T\Theta(U_p)\)
is a vector-bundle isomorphism.
Finally, assume that \(\Psi\) is fibrewise isometric. For tangent classes
\([X],[Y]\in T_qS/\ker R_q\),
\[
\begin{aligned}
\overline g_R([X],[Y])
&=
h_q^{\mathcal Z}(R_qX,R_qY)\\
&=
h^N_{\Theta(q)}
\bigl(
\Psi_qR_qX,\Psi_qR_qY
\bigr)\\
&=
h^N_{\Theta(q)}
\bigl(
d\Theta_qX,d\Theta_qY
\bigr).
\end{aligned}
\]
Thus \(\overline{d\Theta}\) is an isometry onto the tangent bundle of the
local image equipped with the metric induced from \(h^N\).
\end{proof}
\begin{remark}
The existence of an observation map \(\Theta\) realizing the response bundle
morphism is an additional hypothesis. For a general bundle morphism
\(R:TS\longrightarrow\mathcal Z,\)
the invisible bundle need not be integrable.
\end{remark}
%%%%%%%%%%%%%%%%%%%%%%%%%%%%%%%%%%%%%%%%%%%%%%%%%%%%%%%%%%%%%%%%%%%%%%%%%%%%%%%%
\subsection{Rank degeneration}
%%%%%%%%%%%%%%%%%%%%%%%%%%%%%%%%%%%%%%%%%%%%%%%%%%%%%%%%%%%%%%%%%%%%%%%%%%%%%%%%
The geometric constructions developed in the preceding sections require the
response bundle morphism to have locally constant rank. It is therefore
natural to distinguish failure of constant rank from failure of complete
response.
\begin{definition}
The \emph{rank-singular set} is
\[
\Sigma_{\mathrm{rank}}
=
\{
p\in S:
\operatorname{rank}R
\text{ is not locally constant at }
p
\}.
\]
\end{definition}
On the complement
\(S\setminus\Sigma_{\mathrm{rank}},\)
the invisible bundle, observable bundle and observable quotient bundle are
well-defined smooth vector bundles.
By contrast, the failure of complete response is measured by the
response-degeneracy locus
$$S_{\mathrm{deg}} = \{ p\in S: R_p \text{ is not injective} \}.$$
The two notions are independent.
Indeed, a response bundle morphism may have constant rank without being
injective, in which case the observable quotient geometry is perfectly
well-defined although invisible directions persist.

\subsection{Spectral characterization}
%%%%%%%%%%%%%%%%%%%%%%%%%%%%%%%%%%%%%%%%%%%%%%%%%%%%%%%%%%%%%%%%%%%%%%%%%%%%%%%%
Fix the auxiliary Riemannian metric introduced in
Section~\ref{sec:spectral-response-geometry}, and let $G_R$ denote the associated response Gram operator. Assume $\dim S>0$ so that the smallest singular value of $R_p$ is defined for every $p\in S$.

For every parameter $p\in S$, the following conditions are equivalent:
\begin{enumerate}
\item $p\in S_{\mathrm{deg}}$;
\item $\ker R_p\neq{0}$;
\item $\ker (G_R)_p\neq{0}$;
\item $(g_R)_p$ is degenerate;
\item $\sigma_{\min}(R_p)=0$.
\end{enumerate}

Thus the rank-singular set records changes in the observable dimension,
whereas the response-degeneracy locus records failure of complete response.
The two sets coincide only under additional hypotheses.

If the response bundle morphism is real analytic, every rank-defect locus is
a closed real-analytic subset of the parameter manifold. No smooth manifold
structure is asserted without further transversality assumptions.

\section{Covariant Variation of Response Geometry}
\label{sec:response-transport}
The response metric varies over the parameter manifold together with the
response bundle morphism. To differentiate these objects invariantly, one
must choose connections on the tangent and observation bundles. The
constructions of this section are therefore relative to these auxiliary
choices.

Fix an auxiliary Riemannian metric \(g^S\) on \(S\), let
\(\nabla^S\) denote its Levi--Civita connection, and let
\(\nabla^{\mathcal Z}\) be a metric connection on
\((\mathcal Z,h^{\mathcal Z})\). These connections induce the standard
connection on
\(\operatorname{Hom}(TS,\mathcal Z) \cong T^*S\otimes\mathcal Z;\)
see, for example, \cite{KobayashiNomizu,Petersen}.
%%%%%%%%%%%%%%%%%%%%%%%%%%%%%%%%%%%%%%%%%%%%%%%%%%%%%%%%%%%%%%%%%%%%%%%%%%%%%%%%
\subsection{Covariant derivative of the response bundle morphism}
\label{subsec:transport-response-section}
%%%%%%%%%%%%%%%%%%%%%%%%%%%%%%%%%%%%%%%%%%%%%%%%%%%%%%%%%%%%%%%%%%%%%%%%%%%%%%%%
\begin{definition}[Response derivative tensor]
The \emph{response derivative tensor} is
\[
\mathcal T:=\nabla R
\in
\Gamma\!\left(
T^*S\otimes\operatorname{Hom}(TS,\mathcal Z)
\right).
\]
Equivalently, for vector fields \(X,Y\) on \(S\),
\[
\mathcal T(X)Y
=
(\nabla_XR)(Y)
=
\nabla^{\mathcal Z}_X(RY)
-
R(\nabla^S_XY).
\]
\end{definition}
Thus \(\mathcal T\) measures the covariant variation of the response bundle
morphism relative to the chosen connections. It is not determined by the
first-order response theory alone.

The first-order detection theory of \cite{EDP} supplies the pointwise response
data under its stated hypotheses, but no formula for \(\nabla R\) is imported
here from that work. In a concrete Einstein--probe system, expressing
\(\mathcal T\) through differentiated Jacobi, forcing, event-location or
observation operators is a separate second-variation problem.
%%%%%%%%%%%%%%%%%%%%%%%%%%%%%%%%%%%%%%%%%%%%%%%%%%%%%%%%%%%%%%%%%%%%%%%%%%%%%%%%
\subsection{Covariant variation of the response metric}
\label{subsec:transport-response-metric}
%%%%%%%%%%%%%%%%%%%%%%%%%%%%%%%%%%%%%%%%%%%%%%%%%%%%%%%%%%%%%%%%%%%%%%%%%%%%%%%%
The first variation of the response metric is determined algebraically by
\(\mathcal T\).
\begin{proposition}[Covariant derivative of the response metric]
\label{thm:transport-response-metric}
For all vector fields \(X,Y,Z\) on \(S\),
\[
(\nabla_X g_R)(Y,Z)
=
h^{\mathcal Z}\!\left(\mathcal T(X)Y,RZ\right)
+
h^{\mathcal Z}\!\left(RY,\mathcal T(X)Z\right).
\]
\end{proposition}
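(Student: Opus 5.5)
The plan is a direct computation from the definition of the induced connection on covariant $2$-tensors, combined with metric compatibility of $\nabla^{\mathcal Z}$. Here $\nabla_X g_R$ is computed with respect to the Levi--Civita connection $\nabla^S$ of $g^S$ acting on $T^*S\otimes T^*S$. First I write out
\[
(\nabla_X g_R)(Y,Z)=X\bigl(g_R(Y,Z)\bigr)-g_R(\nabla^S_XY,Z)-g_R(Y,\nabla^S_XZ),
\]
and substitute Definition~\ref{def:response-metric}. This gives $g_R(Y,Z)=h^{\mathcal Z}(RY,RZ)$, together with the analogous expressions for the two correction terms.

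Second, I differentiate the function $h^{\mathcal Z}(RY,RZ)$. The sections $RY$ and $RZ$ of $\mathcal Z$ are smooth by the standing hypotheses. Since $\nabla^{\mathcal Z}$ is metric for $h^{\mathcal Z}$,
\[
X\bigl(h^{\mathcal Z}(RY,RZ)\bigr)=h^{\mathcal Z}\bigl(\nabla^{\mathcal Z}_X(RY),RZ\bigr)+h^{\mathcal Z}\bigl(RY,\nabla^{\mathcal Z}_X(RZ)\bigr).
\]

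Third, I group the terms in pairs. I combine $h^{\mathcal Z}(\nabla^{\mathcal Z}_X(RY),RZ)$ with $-h^{\mathcal Z}(R(\nabla^S_XY),RZ)$. By bilinearity and the defining formula of the response derivative tensor, $\mathcal T(X)Y=\nabla^{\mathcal Z}_X(RY)-R(\nabla^S_XY)$, this pair equals $h^{\mathcal Z}(\mathcal T(X)Y,RZ)$. The symmetric pair in $Z$ gives $h^{\mathcal Z}(RY,\mathcal T(X)Z)$ in the same way, and adding the two pairs yields the claimed identity.

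There is no real obstacle. The only point needing care is checking that the connection on $\operatorname{Hom}(TS,\mathcal Z)$ used to define $\mathcal T$ is the one induced from $\nabla^S$ and $\nabla^{\mathcal Z}$, so that the formula for $\mathcal T(X)Y$ holds as stated. The other point is that metric compatibility of $\nabla^{\mathcal Z}$ is genuinely used; without it, an extra term $(\nabla^{\mathcal Z}_X h^{\mathcal Z})(RY,RZ)$ would appear. Since both sides are tensorial in $X,Y,Z$, verifying the identity for vector fields suffices.
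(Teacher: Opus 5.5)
Your proposal is correct and follows the paper's proof essentially verbatim: expand $(\nabla_X g_R)(Y,Z)$ through the induced connection, use metric compatibility of $\nabla^{\mathcal Z}$ to differentiate $h^{\mathcal Z}(RY,RZ)$, and substitute $\nabla^{\mathcal Z}_X(RY)=\mathcal T(X)Y+R(\nabla^S_XY)$ (and likewise for $Z$) so that the correction terms $g_R(\nabla^S_XY,Z)+g_R(Y,\nabla^S_XZ)$ cancel. Your remark that without metric compatibility an extra $(\nabla^{\mathcal Z}_Xh^{\mathcal Z})(RY,RZ)$ term would appear correctly identifies the one hypothesis the computation genuinely uses.
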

\begin{proof}
Since
\(g_R(Y,Z) = h^{\mathcal Z}(RY,RZ),\)
metric compatibility of \(\nabla^{\mathcal Z}\) gives
\[
\begin{aligned}
X\!\left[g_R(Y,Z)\right]
&=
h^{\mathcal Z}\!\left(
\nabla^{\mathcal Z}_X(RY),RZ
\right)
+
h^{\mathcal Z}\!\left(
RY,\nabla^{\mathcal Z}_X(RZ)
\right).
\end{aligned}
\]
Using
$$\nabla^{\mathcal Z}_X(RY) = \mathcal T(X)Y+R(\nabla^S_XY)$$
and the analogous identity for \(Z\), and subtracting
$$g_R(\nabla^S_XY,Z)+g_R(Y,\nabla^S_XZ),$$
yields the stated formula.
\end{proof}
In particular, the variation of \(g_R\) is completely determined by the
response bundle morphism and its covariant derivative once the auxiliary
connections have been fixed.
%%%%%%%%%%%%%%%%%%%%%%%%%%%%%%%%%%%%%%%%%%%%%%%%%%%%%%%%%%%%%%%%%%%%%%%%%%%%%%%%
\subsection{Covariant variation of the response Gram operator}
\label{subsec:transport-gram-operator}
%%%%%%%%%%%%%%%%%%%%%%%%%%%%%%%%%%%%%%%%%%%%%%%%%%%%%%%%%%%%%%%%%%%%%%%%%%%%%%%%
Let \(G_R\) denote the response Gram operator defined by
\(g^S(G_RX,Y)=g_R(X,Y).\)
Since \(\nabla^Sg^S=0\),
\(\nabla G_R = (g^S)^{-1}\nabla g_R.\)
If \(p(t)\) is a smooth path in \(S\), parallel transport with respect to
\(\nabla^S\) identifies the tangent spaces along the path. We write
\(\frac{D}{dt}G_R\)
for the corresponding covariant derivative.
\begin{proposition}[Variation of relative spectral invariants]
\label{thm:transport-spectral-observability}
Suppose that \(\mu_i(t)>0\) is a simple eigenvalue of \(G_R(t)\), with
\(g^S\)-unit eigenvector \(u_i(t)\), and set
\(\sigma_i(t)=\sqrt{\mu_i(t)}.\)
Then
\[
\dot\mu_i
=
g^S\!\left(
\frac{D}{dt}G_R\,u_i,u_i
\right)
\]
and
\[
\dot\sigma_i
=
\frac{1}{2\sigma_i}
g^S\!\left(
\frac{D}{dt}G_R\,u_i,u_i
\right).
\]
On the positive-dimensional part of the complete-response locus,
\[
\frac{d}{dt}\log\det G_R
=
\operatorname{tr}\!\left(
G_R^{-1}\frac{D}{dt}G_R
\right).
\]
If the largest and smallest singular values are simple, then
\[
\frac{d}{dt}\log\kappa_R
=
\frac{\dot\sigma_{\max}}{\sigma_{\max}}
-
\frac{\dot\sigma_{\min}}{\sigma_{\min}}.
\]
\end{proposition}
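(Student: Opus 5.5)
The plan is to work in a \(\nabla^S\)-parallel orthonormal frame along the path \(p(t)\). In such a frame the tangent spaces \(T_{p(t)}S\) are identified isometrically with a fixed Euclidean space. Then \(G_R(t)\) becomes a smooth family of symmetric matrices \(A(t)\), and \(\frac{D}{dt}G_R\) becomes the ordinary derivative \(\dot A(t)\). This identification is legitimate because parallel transport for the Levi--Civita connection preserves \(g^S\). Consequently the matrix representing \(\frac{D}{dt}G_R\) at time \(t\) is exactly \(\dot A(t)\): the covariant derivative of an endomorphism in a parallel frame is the derivative of its matrix. All statements then reduce to first-order perturbation theory for symmetric matrices.

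For a simple eigenvalue \(\mu_i(t)\), I will first justify smoothness. Since \(\mu_i\) is a simple root of the characteristic polynomial of \(A(t)\), the implicit function theorem gives smooth \(\mu_i(t)\) and a smooth eigenvector. The eigenvector is obtained by normalizing, for example, a nonzero column of the adjugate of \(A-\mu_i I\), which has rank one. This step is the main (though mild) obstacle: one must ensure that \(u_i(t)\) can be chosen smoothly and unit, at least locally. Next, differentiate \(Au_i=\mu_i u_i\) and pair with \(u_i\). This gives
\[
\langle \dot A u_i,u_i\rangle+\langle A\dot u_i,u_i\rangle=\dot\mu_i+\mu_i\langle \dot u_i,u_i\rangle .
\]
Self-adjointness gives \(\langle A\dot u_i,u_i\rangle=\mu_i\langle\dot u_i,u_i\rangle\), so \(\dot\mu_i=\langle\dot A u_i,u_i\rangle\), which is the first formula. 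Since \(\mu_i>0\), \(\sigma_i=\sqrt{\mu_i}\) is smooth, and the chain rule \(\dot\sigma_i=\dot\mu_i/(2\sigma_i)\) gives the second formula.

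For the determinant, on \(S_{\mathrm{comp}}\) Proposition~\ref{prop:gram-properties} gives that \(G_R\) is positive definite, so \(\det A>0\) and \(\log\det A\) is smooth. Jacobi's formula \(\frac{d}{dt}\det A=\det A\,\operatorname{tr}(A^{-1}\dot A)\) yields the stated identity. The determinant and trace are frame-independent, so the conclusion holds invariantly.

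Finally, I take \(\kappa_R=\sigma_{\max}/\sigma_{\min}\), the definition presumably fixed nearby in the paper, noting that the \(\sigma_i\) are the singular values of \(R\) because \(G_R=R^\dagger R\). When both extreme singular values are simple and \(\sigma_{\min}>0\), they are smooth by the preceding step. Then \(\log\kappa_R=\log\sigma_{\max}-\log\sigma_{\min}\), and differentiating gives the last formula.
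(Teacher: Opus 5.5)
Your proposal is correct and follows the same route as the paper. Parallel transport along the path reduces \(G_R(t)\) to a smooth family of symmetric matrices, and then the Rayleigh-quotient (Hellmann--Feynman) formula, the chain rule, Jacobi's determinant formula, and differentiation of \(\log\kappa_R=\log\sigma_{\max}-\log\sigma_{\min}\) give the four identities. You also supply details the paper delegates to Horn--Johnson and Kato: smoothness of the simple eigenvalue via the implicit function theorem, and of a unit eigenvector via the rank-one adjugate.
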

\begin{proof}
After parallel transport with respect to \(\nabla^S\), all tangent spaces are identified with a fixed Euclidean vector space, so that \(G_R(t)\) becomes a smooth family of self-adjoint matrices. The first two identities are the classical Rayleigh quotient formulas for simple eigenvalues. On the complete-response locus, \(G_R(t)\) is invertible, so Jacobi's determinant formula gives
\[
\frac{d}{dt}\log\det G_R
=
\operatorname{tr}\!\left(
G_R^{-1}\frac{D}{dt}G_R
\right).
\]
Finally, the condition-number identity is obtained by differentiating
\(\log\kappa_R = \log\sigma_{\max} - \log\sigma_{\min},\)
which is valid while the extremal singular values remain simple.
See \cite{HornJohnson,Kato}.
\end{proof}
No differentiability of individual eigenvalue branches is asserted at
eigenvalue crossings. Likewise, formulas involving
\(\sigma_{\min}^{-1}\) or \(G_R^{-1}\) are restricted to regions where the
corresponding quantities are positive.

%%%%%%%%%%%%%%%%%%%%%%%%%%%%%%%%%%%%%%%%%%%%%%%%%%%%%%%%%%%%%%%%%%%%%%%%%%%%%%%%
\subsection{Variation of the response volume density}
\label{subsec:transport-response-volume}
%%%%%%%%%%%%%%%%%%%%%%%%%%%%%%%%%%%%%%%%%%%%%%%%%%%%%%%%%%%%%%%%%%%%%%%%%%%%%%%%
On the complete-response locus, define the response volume density relative
to the auxiliary metric \(g^S\) by
\(d\operatorname{vol}_{g_R} = \sqrt{\det G_R}\, d\operatorname{vol}_{g^S}.\)
\begin{corollary}
Along a smooth path contained in the positive-dimensional
complete-response locus,
\[
\frac{D}{dt}d\operatorname{vol}_{g_R}
=
\frac12
\operatorname{tr}\!\left(
G_R^{-1}\frac{D}{dt}G_R
\right)
d\operatorname{vol}_{g_R},
\]
where the derivative on densities is taken with respect to the connection
induced by \(\nabla^S\).
\end{corollary}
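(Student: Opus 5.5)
The plan is to reduce the corollary to the determinant formula in Proposition~\ref{thm:transport-spectral-observability}. Write the response volume density along the path as
\[
d\operatorname{vol}_{g_R}=f(t)\,d\operatorname{vol}_{g^S},
\qquad
f(t):=\sqrt{\det G_R(p(t))}.
\]
On the complete-response locus \(G_R\) is positive definite by Proposition~\ref{prop:gram-properties}. Hence \(f>0\) there and \(\log f\) is smooth along the path.

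The first step is to observe that the Riemannian density \(d\operatorname{vol}_{g^S}\) is parallel for the connection induced by \(\nabla^S\) on the density bundle. This holds because \(\nabla^S g^S=0\), and parallel transport along \(p(t)\) is then a \(g^S\)-isometry, so it carries \(g^S\)-orthonormal frames to orthonormal frames and preserves the density. By the Leibniz rule for the induced connection on densities, we get
\[
\frac{D}{dt}d\operatorname{vol}_{g_R}=\dot f(t)\,d\operatorname{vol}_{g^S}.
\]
Here \(\det G_R\) is a scalar function, independent of frames, so its time derivative is the ordinary derivative of a function along the path.

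The second step computes \(\dot f\). We have \(\dot f/f=\tfrac12\frac{d}{dt}\log\det G_R\). Proposition~\ref{thm:transport-spectral-observability} gives
\[
\frac{d}{dt}\log\det G_R=\operatorname{tr}\!\left(G_R^{-1}\frac{D}{dt}G_R\right).
\]
Therefore
\[
\frac{D}{dt}d\operatorname{vol}_{g_R}=\tfrac12\operatorname{tr}\!\left(G_R^{-1}\frac{D}{dt}G_R\right) f\,d\operatorname{vol}_{g^S},
\]
and the right-hand side is the claimed formula.

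The only delicate point is checking that \(d\operatorname{vol}_{g^S}\) is parallel and that the trace identity is frame-independent. In the parallel orthonormal frame used in the proof of Proposition~\ref{thm:transport-spectral-observability}, \(G_R(t)\) becomes a smooth family of matrices and \(\frac{D}{dt}G_R\) is its ordinary derivative. Jacobi's formula then applies directly in that frame. Positive dimensionality is needed only so that the determinant and density are nontrivial, as in the cited proposition.
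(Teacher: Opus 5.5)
Your proposal is correct and follows the same route as the paper. The paper likewise uses that \(d\operatorname{vol}_{g^S}\) is parallel for \(\nabla^S\), then differentiates \(\sqrt{\det G_R}\) via the log-determinant identity of Proposition~\ref{thm:transport-spectral-observability}; you simply spell out the steps the paper leaves implicit (parallel transport being a \(g^S\)-isometry, the Leibniz rule on densities, and positivity of \(\det G_R\) on \(S_{\mathrm{comp}}\)).
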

\begin{proof}
The volume density of \(g^S\) is parallel under its Levi--Civita connection.
The result therefore follows by differentiating
\(\sqrt{\det G_R}\)
and applying Proposition~\ref{thm:transport-spectral-observability}.
\end{proof}
%%%%%%%%%%%%%%%%%%%%%%%%%%%%%%%%%%%%%%%%%%%%%%%%%%%%%%%%%%%%%%%%%%%%%%%%%%%%%%%%
\subsection{A conditional closure hypothesis}
\label{subsec:closed-transport}
%%%%%%%%%%%%%%%%%%%%%%%%%%%%%%%%%%%%%%%%%%%%%%%%%%%%%%%%%%%%%%%%%%%%%%%%%%%%%%%%
The general response theory does not provide an evolution or closure
equation for \(R\). In a particular model one may, however, impose or derive
additional structure.
Suppose that there exists a smooth bundle map
\[
\Phi:
\operatorname{Hom}(TS,\mathcal Z)
\longrightarrow
T^*S\otimes\operatorname{Hom}(TS,\mathcal Z)
\]
such that
\(\nabla R=\Phi(R).\)
Then Proposition~\ref{thm:transport-response-metric} becomes
\[
(\nabla_Xg_R)(Y,Z)
=
h^{\mathcal Z}\!\left(
\Phi(R)(X)Y,RZ
\right)
+
h^{\mathcal Z}\!\left(
RY,\Phi(R)(X)Z
\right).
\]
This is a conditional closure hypothesis, not a consequence of the abstract
response geometry or of the first-order detection theory in \cite{EDP}. Establishing such a
formula for a concrete Einstein--probe system requires an independent
second-order analysis of the corresponding Einstein, Jacobi and observation
equations.
\section{Minimal Scalar Probe Families}
\label{sec:minimal-probes}
%%%%%%%%%%%%%%%%%%%%%%%%%%%%%%%%%%%%%%%%%%%%%%%%%%%%%%%%%%%%%%%%%%%%%%%%%%%%%%%%
The response metric describes infinitesimal observability once a probe
package has been fixed. A complementary question is how many scalar
measurements obtained by admissible linear postprocessing of a fixed master
observation map are required to retain infinitesimal completeness.
At a point of an \(m\)-dimensional parameter manifold, every scalar
observation map with injective differential must have at least \(m\)
components. Under an explicit richness hypothesis on the admissible scalar
postprocessings, this lower bound is attained. A minimal complete family then
provides local reconstruction and quantitative stability.
%%%%%%%%%%%%%%%%%%%%%%%%%%%%%%%%%%%%%%%%%%%%%%%%%%%%%%%%%%%%%%%%%%%%%%%%%%%%%%%%
\subsection{Master observations and admissible scalar postprocessings}
%%%%%%%%%%%%%%%%%%%%%%%%%%%%%%%%%%%%%%%%%%%%%%%%%%%%%%%%%%%%%%%%%%%%%%%%%%%%%%%%
Let
\((S,\{g_p\}_{p\in S})\)
be a parameterized Einstein family in the sense of
the standing hypotheses introduced above. Fix
\(p\in S\)
and set
\[
V:=T_pS,
\qquad
m:=\dim V.
\]
Suppose that, on an open neighbourhood \(U\subset S\) of \(p\), the chosen
probe package determines a \(C^1\) master observation map
\(\mathcal A:U\longrightarrow Y\)
into a Banach space \(Y\). Write
\[
L:=d\mathcal A_p:V\longrightarrow Y.
\]
Let \(E\) be a Banach space together with a continuous injective linear map
\(\iota:E\longrightarrow Y^*.\)
We identify \(E\) with its image under \(\iota\) and call its elements
\emph{admissible scalar postprocessings}.
For
\(\boldsymbol\ell = (\ell_1,\ldots,\ell_N) \in E^N,\)
define
\[
\Theta_{\boldsymbol\ell}
:=
\bigl(
\ell_1\circ\mathcal A,\ldots,\ell_N\circ\mathcal A
\bigr)
:
U\longrightarrow\mathbb R^N.
\]
Since every \(\ell_i\) is continuous and linear,
\(\Theta_{\boldsymbol\ell}\) is \(C^1\), with
\[
d\Theta_{\boldsymbol\ell,p}(v)
=
\bigl(
\ell_1(Lv),\ldots,\ell_N(Lv)
\bigr),
\qquad
v\in V.
\]
\begin{definition}[Admissible scalar probe number]
\label{def:admissible-scalar-probe-number}
The \emph{admissible scalar probe number} at \(p\) is
\[
\nu_E(p)
:=
\min
\left\{
N\in\mathbb N_0:
\begin{array}{c}
\text{there exists }
\boldsymbol\ell\in E^N
\text{ for which}\\[1mm]
d\Theta_{\boldsymbol\ell,p}
\text{ is injective}
\end{array}
\right\},
\]
provided that this set is nonempty. If no finite infinitesimally complete
admissible family exists, we set
\[
\nu_E(p):=+\infty.
\]
For \(N=0\), we use the conventions
\[
E^0=\{()\},
\qquad
\mathbb R^0=\{0\}.
\]
\end{definition}
Equip the finite-dimensional subspace
\(L(V)\subset Y\)
with the norm induced from \(Y\), and equip \(L(V)^*\) with the corresponding
dual norm. The relevant richness hypothesis is surjectivity of the continuous
linear restriction map
\[
\rho_p:E\longrightarrow L(V)^*,
\qquad
\rho_p(\ell):=\ell|_{L(V)}.
\]
Thus every continuous linear functional on the finite-dimensional master
response space \(L(V)\) is realized by an admissible scalar postprocessing.
%%%%%%%%%%%%%%%%%%%%%%%%%%%%%%%%%%%%%%%%%%%%%%%%%%%%%%%%%%%%%%%%%%%%%%%%%%%%%%%%
\begin{remark}
The invariant $\nu_E(p)$ is pointwise. Even when its value equals $\dim T_pS$ at every point, the particular minimizing family of scalar postprocessings need not vary continuously with $p$ and need not define a locally constant choice.
\end{remark}
\subsection{The minimal scalar probe theorem}
%%%%%%%%%%%%%%%%%%%%%%%%%%%%%%%%%%%%%%%%%%%%%%%%%%%%%%%%%%%%%%%%%%%%%%%%%%%%%%%%
\begin{theorem}[Minimal scalar probe theorem]
\label{thm:minimal-scalar-probe}
Assume that
\(\rho_p:E\longrightarrow L(V)^*\)
is surjective. Then the following statements hold.
\begin{enumerate}
\item
A finite infinitesimally complete admissible scalar family exists if and only
if
\(L=d\mathcal A_p\)
is injective.
\item
If \(L\) is injective, then
\[
\nu_E(p)
=
m
=
\dim T_pS.
\]
In particular, if \(m=0\), then the empty scalar family is infinitesimally
complete and
\(\nu_E(p)=0.\)
\item
Suppose that \(L\) is injective and \(N\ge m\). Then
\[
\mathcal C_{p,N}
:=
\left\{
\boldsymbol\ell\in E^N:
d\Theta_{\boldsymbol\ell,p}
\text{ is injective}
\right\}
\]
is open and dense in \(E^N\) with respect to its Banach-space norm topology.
\item
Suppose that \(m\ge1\), \(N=m\), and
\(\boldsymbol\ell\in\mathcal C_{p,m}.\)
Then \(\Theta_{\boldsymbol\ell}\) is a local \(C^1\)-diffeomorphism at
\(p\). Consequently, there exists an open neighbourhood
\(U_0\subset U\) of \(p\) such that
\[
\Theta_{\boldsymbol\ell}^{-1}
\bigl(
\Theta_{\boldsymbol\ell}(p)
\bigr)
\cap U_0
=
\{p\}.
\]
\item
Equip \(S\) with an auxiliary Riemannian metric \(g^S\), and let \(d_S\)
denote the associated Riemannian distance. Under the hypotheses of
part~\textup{(4)}, the neighbourhood \(U_0\) after shrinking the neighbourhood if necessary, may be chosen so that there
exist constants
\(0<c\le C<\infty\)
such that
\[
c\,d_S(p_1,p_2)
\le
\left|
\Theta_{\boldsymbol\ell}(p_1)
-
\Theta_{\boldsymbol\ell}(p_2)
\right|
\le
C\,d_S(p_1,p_2)
\]
for all
\(p_1,p_2\in U_0.\)
Thus every minimal infinitesimally complete admissible family induces a
local bi-Lipschitz chart on the parameter manifold near \(p\).
\end{enumerate}
\end{theorem}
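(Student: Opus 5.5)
The whole argument rests on one factorization. For $\boldsymbol\ell\in E^N$ we have
\[
d\Theta_{\boldsymbol\ell,p}=\bigl(\rho_p(\ell_1),\ldots,\rho_p(\ell_N)\bigr)\circ L ,
\]
where the first factor is a map $L(V)\to\mathbb R^N$ and $L$ is viewed as a map $V\to L(V)$. Consequently $d\Theta_{\boldsymbol\ell,p}$ is injective if and only if two things hold: $L$ is injective, and the functionals $\rho_p(\ell_1),\ldots,\rho_p(\ell_N)$ separate the points of $L(V)$, i.e.\ span $L(V)^*$.

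\textbf{Parts (1) and (2).} If $L$ fails to be injective, every $d\Theta_{\boldsymbol\ell,p}$ has nontrivial kernel. Conversely, suppose $L$ is injective. Then $\dim L(V)=m$. Choose a basis $\phi_1,\ldots,\phi_m$ of $L(V)^*$ and use surjectivity of $\rho_p$ to lift each $\phi_i$ to some $\ell_i\in E$. With these lifts the map $d\Theta_{\boldsymbol\ell,p}$ is injective, so $\nu_E(p)\le m$. The lower bound $\nu_E(p)\ge m$ holds because an injective linear map $V\to\mathbb R^N$ forces $N\ge m$. The case $m=0$ is covered by the stated conventions, since $V=\{0\}$ and the empty family works.

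\textbf{Part (3).} Choose a basis $v_1,\ldots,v_m$ of $V$. Injectivity of $d\Theta_{\boldsymbol\ell,p}$ is equivalent to the $N\times m$ matrix $M(\boldsymbol\ell)=(\ell_i(Lv_j))$ having rank $m$. Each entry depends continuously and linearly on $\boldsymbol\ell$, because $\iota$ is continuous and the product topology on $E^N$ is the norm topology. Openness follows since the rank-$m$ condition is the nonvanishing of some $m\times m$ minor, exactly as in Proposition~\ref{prop:openness-complete-response-locus}.

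For density, fix $\boldsymbol\ell$ and use part (2) to pick $\boldsymbol\ell^0\in\mathcal C_{p,N}$; for $N>m$ we pad it with zero functionals. Consider the affine line $\boldsymbol\ell+t(\boldsymbol\ell^0-\boldsymbol\ell)$. Along it, a fixed $m\times m$ minor that is nonzero at $t=1$ is a polynomial in $t$ which does not vanish identically. It therefore has only finitely many zeros, so there are points $t\to0$ lying in $\mathcal C_{p,N}$. This polynomial-along-a-segment step is the only genuinely non-formal point. The main obstacle is simply to make sure that minors are polynomial in $t$ via linearity of $\boldsymbol\ell\mapsto M(\boldsymbol\ell)$, which is immediate.

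\textbf{Parts (4) and (5).} For $N=m$, $d\Theta_{\boldsymbol\ell,p}$ is an injective linear map $V\to\mathbb R^m$, hence an isomorphism. Since $\Theta_{\boldsymbol\ell}$ is $C^1$, the inverse function theorem makes it a $C^1$ diffeomorphism from a neighbourhood $U_0$ onto an open set. This gives local injectivity and hence the fibre statement.

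For the bi-Lipschitz bound, shrink $U_0$ to a small $g^S$-geodesically convex ball contained in a coordinate chart. There $g^S$ is uniformly equivalent to the Euclidean metric of the chart, and $d\Theta_{\boldsymbol\ell}$ and $d(\Theta_{\boldsymbol\ell}^{-1})$ are bounded on compact closures. The upper bound follows by integrating $|d\Theta_{\boldsymbol\ell}|$ along minimizing geodesics. The lower bound is proved as in Theorem~\ref{thm:reconstruction-estimate}: take $\Theta_{\boldsymbol\ell}(U_0)$ convex, for instance by restricting to the preimage of a small ball around $\Theta_{\boldsymbol\ell}(p)$. Then integrate $d(\Theta_{\boldsymbol\ell}^{-1})$ along straight segments in $\mathbb R^m$, which gives a bound of $d_S$ by a constant times the Euclidean distance.
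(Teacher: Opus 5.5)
Your proof is correct. Parts (1), (2), (4), (5) and the openness half of (3) match the paper's argument: lifting a basis of $L(V)^*$ through $\rho_p$, the dimension count, the inverse function theorem, nonvanishing of continuous minors, and the two integrations (along minimizing geodesics in a strongly convex neighbourhood, and along straight segments in a small Euclidean ball whose preimage lies in that neighbourhood).

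The genuine difference is density in (3). The paper forms $\mathcal P_N:E^N\to\operatorname{Hom}(V,\mathbb R^N)$ and uses surjectivity of $\rho_p$ to show $\mathcal P_N$ is onto. It then applies the Open Mapping Theorem and concludes because the open image of any nonempty open set meets the dense set $\operatorname{Inj}(V,\mathbb R^N)$. You need only one good point $\boldsymbol\ell^0\in\mathcal C_{p,N}$. Along $\boldsymbol\ell+t(\boldsymbol\ell^0-\boldsymbol\ell)$, a fixed $m\times m$ minor is a polynomial in $t$ of degree at most $m$ that is nonzero at $t=1$, so it vanishes only finitely often.

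Your route is more elementary. It uses neither completeness of $E$ nor the Open Mapping Theorem, only continuity of $\iota$ and the existence of one good point from part (2). It also gives the sharper fact that every affine line through a point of $\mathcal C_{p,N}$ meets the complement in at most $m$ points.

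The paper's route is more structural. An open continuous surjection pulls back every dense subset of $\operatorname{Hom}(V,\mathbb R^N)$ to a dense subset of $E^N$. So the argument applies verbatim to any open dense condition on the differential, not only one cut out by minors. Since the target is finite-dimensional, openness of $\mathcal P_N$ could also be obtained without Banach-space theory, via a finite-dimensional linear right inverse.

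One small point: for $m=0$ your minor argument silently relies on the convention that the empty minor equals $1$. It is cleaner to note directly, as the paper does, that $\mathcal C_{p,N}=E^N$ in that case.
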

The injectivity of the master response and the richness of the admissible
postprocessing class are logically independent hypotheses.
\begin{proof}
Suppose first that \(L\) is not injective. Choose
\(0\neq v\in\ker L.\)
For every \(N\in\mathbb N_0\) and every
\(\boldsymbol\ell\in E^N,\)
one has
\[
d\Theta_{\boldsymbol\ell,p}(v)
=
\bigl(
\ell_1(Lv),\ldots,\ell_N(Lv)
\bigr)
=
0.
\]
Hence no admissible scalar family can be infinitesimally complete.
Assume conversely that \(L\) is injective. Then
\(L:V\longrightarrow L(V)\)
is a linear isomorphism and
\(\dim L(V)=m.\)
Choose a basis
\(\lambda_1,\ldots,\lambda_m\)
of \(L(V)^*\). By surjectivity of \(\rho_p\), choose
\(\ell_1,\ldots,\ell_m\in E\)
such that
\(\ell_i|_{L(V)}=\lambda_i.\)
The resulting linear map
$$v \longmapsto \bigl( \ell_1(Lv),\ldots,\ell_m(Lv) \bigr)$$
is an isomorphism
\(V\longrightarrow\mathbb R^m.\)
Hence \(m\) admissible scalar observations suffice.
If \(m=0\), the unique map
\(\{0\}\longrightarrow\mathbb R^0\)
is injective, so the empty family is infinitesimally complete.
Conversely, if
\(d\Theta_{\boldsymbol\ell,p} : V\longrightarrow\mathbb R^N\)
is injective, finite-dimensional linear algebra gives
\(N\ge\dim V=m.\)
Therefore
\(\nu_E(p)=m.\)
Now let \(N\ge m\), and define
$\mathcal P_N:E^N \longrightarrow \operatorname{Hom}(V,\mathbb R^N)$
by
$$\mathcal P_N(\boldsymbol\ell)(v) = \bigl( \ell_1(Lv),\ldots,\ell_N(Lv) \bigr).$$
Equip
\(\operatorname{Hom}(V,\mathbb R^N)\)
with any operator norm induced by norms on \(V\) and \(\mathbb R^N\).
Since this space is finite dimensional, all such norms are equivalent and
induce the same topology.
The map \(\mathcal P_N\) is continuous and linear. To prove surjectivity, let
\(T=(T_1,\ldots,T_N) \in \operatorname{Hom}(V,\mathbb R^N),\)
where \(T_i\in V^*\). Since
\(L:V\longrightarrow L(V)\)
is an isomorphism, each
\[
\lambda_i:=T_i\circ L^{-1}
\]
belongs to \(L(V)^*\). Surjectivity of \(\rho_p\) yields
\(\ell_i\in E\) satisfying
\(\ell_i|_{L(V)}=\lambda_i.\)
Hence
\(\mathcal P_N(\ell_1,\ldots,\ell_N)=T,\)
so \(\mathcal P_N\) is surjective.
Both \(E^N\) and
\(\operatorname{Hom}(V,\mathbb R^N)\) are Banach spaces. Therefore the Open
Mapping Theorem implies that \(\mathcal P_N\) is an open map.
If \(m=0\), every linear map
\(V=\{0\}\longrightarrow\mathbb R^N\)
is injective, and hence
\(\mathcal C_{p,N}=E^N.\)
Assume henceforth in the genericity argument that \(m\ge1\).
The set
$$\operatorname{Inj}(V,\mathbb R^N) \subset \operatorname{Hom}(V,\mathbb R^N)$$
is open and dense for \(N\ge m\). Choose arbitrary bases of \(V\) and
\(\mathbb R^N\). Under these identifications,
\(\operatorname{Inj}(V,\mathbb R^N)\) is the set of
\(N\times m\) matrices of column rank \(m\). Openness follows from the
nonvanishing of an \(m\times m\) minor, while density follows from
arbitrarily small perturbations to full column rank.
Therefore
\[
\mathcal C_{p,N}
=
\mathcal P_N^{-1}
\bigl(
\operatorname{Inj}(V,\mathbb R^N)
\bigr)
\]
is open.
To prove density, let
\(\mathcal O\subset E^N\)
be nonempty and open. Since \(\mathcal P_N\) is open and surjective,
\(\mathcal P_N(\mathcal O)\)
is a nonempty open subset of
\(\operatorname{Hom}(V,\mathbb R^N)\). It therefore intersects the dense set
\(\operatorname{Inj}(V,\mathbb R^N)\). Consequently,
\(\mathcal O\cap\mathcal C_{p,N}\neq\varnothing.\)
Thus \(\mathcal C_{p,N}\) is dense.
Suppose now that \(m\ge1\), \(N=m\), and
\(\boldsymbol\ell\in\mathcal C_{p,m}.\)
Then
\(d\Theta_{\boldsymbol\ell,p} : T_pS\longrightarrow\mathbb R^m\)
is a linear isomorphism. The inverse function theorem therefore gives open
neighbourhoods
\[
p\in U_1\subset U,
\qquad
\Theta_{\boldsymbol\ell}(p)\in V_1\subset\mathbb R^m,
\]
such that
\(\Theta_{\boldsymbol\ell}|_{U_1} : U_1\longrightarrow V_1\)
is a \(C^1\)-diffeomorphism. This proves the local uniqueness statement.
For the quantitative estimate, choose a relatively compact strongly convex
normal neighbourhood
\(p\in W\Subset U_1.\)
By continuity of the inverse map, there exists an open Euclidean ball
\(\Theta_{\boldsymbol\ell}(p)\in V_0\Subset V_1\)
such that
\[
U_0
:=
\bigl(
\Theta_{\boldsymbol\ell}|_{U_1}
\bigr)^{-1}(V_0)
\subset W.
\]
Because \(\overline W\subset U_1\) is compact, there exists \(C>0\) such that
\[
\|d\Theta_{\boldsymbol\ell,x}\|_{g^S,\mathrm{eucl}}
\le C
\]
for all \(x\in\overline W\). For \(p_1,p_2\in U_0\), the minimizing
geodesic contained in the strongly convex neighbourhood \(W\) has length
\(d_S(p_1,p_2)\). Integrating \(d\Theta_{\boldsymbol\ell}\) along this
geodesic gives
\[
\left|
\Theta_{\boldsymbol\ell}(p_1)
-
\Theta_{\boldsymbol\ell}(p_2)
\right|
\le
C\,d_S(p_1,p_2).
\]
The inverse map
\(\bigl( \Theta_{\boldsymbol\ell}|_{U_1} \bigr)^{-1} : V_1\longrightarrow U_1\)
has bounded differential on \(\overline{V_0}\). Thus there exists \(C'>0\)
such that
\[
\left\|
d\bigl(
\Theta_{\boldsymbol\ell}|_{U_1}
\bigr)^{-1}_y
\right\|_{\mathrm{eucl},g^S}
\le C'
\]
for all \(y\in\overline{V_0}\).
Since \(V_0\) is convex, the Euclidean line segment joining
\(\Theta_{\boldsymbol\ell}(p_1)\) and
\(\Theta_{\boldsymbol\ell}(p_2)\) lies in \(V_0\). Its image under the inverse
map is a piecewise \(C^1\) curve \(\gamma\) in \(U_0\) joining \(p_1\) to
\(p_2\). The derivative bound gives
\[
\operatorname{Length}_{g^S}(\gamma)
\le
C'
\left|
\Theta_{\boldsymbol\ell}(p_1)
-
\Theta_{\boldsymbol\ell}(p_2)
\right|.
\]
Since Riemannian distance is bounded above by the length of every joining
curve,
\[
d_S(p_1,p_2)
\le
\operatorname{Length}_{g^S}(\gamma),
\]
and hence
\[
d_S(p_1,p_2)
\le
C'
\left|
\Theta_{\boldsymbol\ell}(p_1)
-
\Theta_{\boldsymbol\ell}(p_2)
\right|.
\]
Set $c:=(C')^{-1}$, which
completes the proof.
\end{proof}

\begin{corollary}[Full-dual postprocessings]
\label{cor:full-dual-minimal-probes}
Suppose that
\(E=Y^*.\)
Then the restriction map
\(Y^*\longrightarrow L(V)^*\)
is surjective. Consequently, if \(d\mathcal A_p\) is injective, then
\(\nu_{Y^*}(p)=\dim T_pS.\)
\end{corollary}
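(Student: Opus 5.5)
The corollary follows from Theorem~\ref{thm:minimal-scalar-probe} once the richness hypothesis is checked for \(E=Y^*\) with \(\iota\) the identity (continuous, linear and injective). So the plan is to show that the restriction map \(\rho_p:Y^*\to L(V)^*\), \(\ell\mapsto\ell|_{L(V)}\), is surjective, and then quote part~(2) of that theorem.

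For surjectivity, fix \(\lambda\in L(V)^*\). Since \(L(V)\) is a finite-dimensional subspace of the normed space \(Y\), every linear functional on it is continuous; in particular \(\lambda\) is bounded for the norm induced from \(Y\). The Hahn--Banach extension theorem then gives \(\ell\in Y^*\) with \(\ell|_{L(V)}=\lambda\) and \(\|\ell\|_{Y^*}=\|\lambda\|_{L(V)^*}\). Hence \(\rho_p(\ell)=\lambda\), so \(\rho_p\) is surjective.

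Alternatively, one can avoid the general theorem. Choose a basis \(y_1,\dots,y_k\) of \(L(V)\) and the dual coordinate functionals. Each coordinate functional extends to \(Y\) by Hahn--Banach, or by a bounded projection onto the finite-dimensional subspace, which always exists. Linear combinations of these extensions then realize every \(\lambda\).

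Finally, if \(d\mathcal A_p=L\) is injective, the hypotheses of Theorem~\ref{thm:minimal-scalar-probe} hold with \(E=Y^*\). Part~(2) then yields \(\nu_{Y^*}(p)=m=\dim T_pS\), including the case \(m=0\). There is no real obstacle: the only substantive input is Hahn--Banach, together with the remark that finite-dimensionality makes every functional on \(L(V)\) automatically continuous, so the dual-norm convention plays no role.
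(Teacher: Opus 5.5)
Your proof is correct and follows the paper's own argument: extend each functional on the finite-dimensional subspace \(L(V)\) to all of \(Y\) by Hahn--Banach to get surjectivity of the restriction map, then apply part~(2) of Theorem~\ref{thm:minimal-scalar-probe}. Your bounded-projection alternative is also fine, but it is not independent of Hahn--Banach, since the existence of such a projection is usually proved with Hahn--Banach.
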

\begin{proof}
The finite-dimensional subspace \(L(V)\subset Y\) is closed. Every
continuous linear functional on \(L(V)\) extends to a continuous linear
functional on \(Y\) by the Hahn--Banach theorem.
\end{proof}
\begin{corollary}[Minimal complete scalar response metric]
\label{cor:minimal-complete-response-metric}
Under the hypotheses of
Theorem~\ref{thm:minimal-scalar-probe}, suppose that \(L\) is injective and
choose
\(\boldsymbol\ell\in\mathcal C_{p,m}.\)
Define
\[
g_{\boldsymbol\ell}
:=
\Theta_{\boldsymbol\ell}^*
\langle\cdot,\cdot\rangle_{\mathbb R^m}.
\]
Then \(g_{\boldsymbol\ell}\) is positive definite at \(p\). After shrinking
the neighbourhood of \(p\), it is a Riemannian metric.
Moreover,
\(m=\dim T_pS\)
is the smallest number of admissible linear scalar postprocessings of the
fixed master observation map whose pullback Euclidean metric can be positive
definite at \(p\).
\end{corollary}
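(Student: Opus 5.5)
The plan is to reduce everything to Theorem~\ref{thm:minimal-scalar-probe} and the pullback computation of Section~\ref{sec:response-metric}. First, I will note that $\Theta_{\boldsymbol\ell}$ is $C^1$, so $g_{\boldsymbol\ell}$ is a continuous symmetric $2$-tensor on $U$. Its value at $x$ is
\[
(g_{\boldsymbol\ell})_x(X,Y)=\bigl\langle d\Theta_{\boldsymbol\ell,x}X,\,d\Theta_{\boldsymbol\ell,x}Y\bigr\rangle_{\mathbb R^m}.
\]
This is exactly the Gram-form construction of Definition~\ref{def:response-metric}, with $R$ replaced by $d\Theta_{\boldsymbol\ell}$ and $h^{\mathcal Z}$ by the Euclidean metric on the trivial bundle $U\times\mathbb R^m$. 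The argument of Proposition~\ref{prop:elementary-response-metric-properties} and Theorem~\ref{thm:complete-response-riemannian-metric} then shows that $(g_{\boldsymbol\ell})_x$ is positive semidefinite, and that it is positive definite if and only if $d\Theta_{\boldsymbol\ell,x}$ is injective. That argument is purely fibrewise and does not use smoothness. Since $\boldsymbol\ell\in\mathcal C_{p,m}$, injectivity holds at $p$, which gives positive definiteness at $p$.

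Next, to obtain a metric on a neighbourhood, I will invoke part~(4) of Theorem~\ref{thm:minimal-scalar-probe} when $m\ge1$. It gives $U_0$ on which $\Theta_{\boldsymbol\ell}$ is a $C^1$-diffeomorphism onto its image, so $d\Theta_{\boldsymbol\ell,x}$ is injective for every $x\in U_0$. Alternatively, I can use the minor-continuity argument of Proposition~\ref{prop:openness-complete-response-locus} applied to the continuous matrix $d\Theta_{\boldsymbol\ell}$. In either case $g_{\boldsymbol\ell}$ is positive definite on $U_0$. The case $m=0$ is trivial.

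The one subtle point, which I expect to be the main obstacle, is regularity: $\mathcal A$ is only $C^1$, so $g_{\boldsymbol\ell}$ is a priori only a continuous Riemannian metric. I will state this explicitly: "Riemannian metric" is meant in the continuous sense, or in the smooth sense whenever $\mathcal A$ is smooth. Alternatively, $g_{\boldsymbol\ell}$ is the pullback of the flat metric under the $C^1$ chart $\Theta_{\boldsymbol\ell}|_{U_0}$, and in that chart it is the Euclidean metric itself.

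Finally, for minimality, suppose $\boldsymbol\ell'\in E^N$ and $\Theta_{\boldsymbol\ell'}^*\langle\cdot,\cdot\rangle_{\mathbb R^N}$ is positive definite at $p$. By the kernel identity above, $d\Theta_{\boldsymbol\ell',p}$ is injective, so $N\ge\nu_E(p)$. Part~(2) of Theorem~\ref{thm:minimal-scalar-probe} gives $\nu_E(p)=m$, and the family $\boldsymbol\ell$ itself attains $N=m$. This proves that $m=\dim T_pS$ is the smallest such number.
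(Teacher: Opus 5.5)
Your proposal is correct and follows essentially the same route as the paper: the Gram identity $(g_{\boldsymbol\ell})_p(X,X)=|d\Theta_{\boldsymbol\ell,p}X|^2$ gives positive definiteness at $p$, and openness of injectivity gives the neighbourhood statement. Minimality comes from $N\ge\dim T_pS$ for any injective $d\Theta_{\boldsymbol\ell',p}$, which you obtain via $\nu_E(p)=m$ and the paper by direct linear algebra. Your observation that $g_{\boldsymbol\ell}$ is a priori only a continuous metric when $\mathcal A$ is merely $C^1$ is a valid refinement that the paper's proof passes over silently.
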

\begin{proof}
For \(X\in T_pS\),
\[
(g_{\boldsymbol\ell})_p(X,X)
=
\left|
d\Theta_{\boldsymbol\ell,p}(X)
\right|^2.
\]
Thus positive definiteness at \(p\) is equivalent to injectivity of
\(d\Theta_{\boldsymbol\ell,p}\). Injectivity of the differential is an open
condition, so the same fixed family
\(\boldsymbol\ell\) defines a positive-definite pullback metric after
shrinking the neighbourhood.
If a finite family of \(N\) admissible linear scalar postprocessings produces
a map into \(\mathbb R^N\) with injective differential at \(p\), then
\(N\ge\dim T_pS=m.\)
Hence \(m\) is minimal.
\end{proof}

\begin{remark}[Scope of the abstract theorem]
\label{rem:minimal-probe-detection}
Theorem~\ref{thm:minimal-scalar-probe} is an abstract finite-dimensional
statement based on a functional-analytic surjectivity hypothesis. Its
application to a specific Einstein--probe system requires independent
verification of
\begin{enumerate}
\item
a \(C^1\) master observation map
\(\mathcal A:U\longrightarrow Y;\)
\item
injectivity of
\(d\mathcal A_p:T_pS\longrightarrow Y;\)
\item
surjectivity of the admissible restriction map
\(E\longrightarrow d\mathcal A_p(T_pS)^*.\)
\end{enumerate}
No one of these properties is attributed to \cite{EDP} unless it has been
established there for the particular probe package and admissible scalar
observation class under consideration.
\end{remark}
\begin{remark}[Pointwise character]
\label{rem:minimal-probe-pointwise}
Theorem~\ref{thm:minimal-scalar-probe} is pointwise. It does not assert that
one fixed \(m\)-tuple selected at \(p\) is minimal throughout a neighbourhood
or on an entire parameterized Einstein family.

The same fixed tuple remains infinitesimally complete on some sufficiently
small neighbourhood of \(p\), because injectivity of its differential is an
open condition. Uniform or global minimality requires additional hypotheses
and may fail even when pointwise minimality holds everywhere.
\end{remark}
%%%%%%%%%%%%%%%%%%%%%%%%%%%%%%%%%%%%%%%%%%%%%%%%%%%%%%%%%%%%%%%%%%%%%%%%%%%%%%%%
\section{Finite-Dimensional Illustrations and Conditional Cohomogeneity-One Applications}
\label{sec:applications}
%%%%%%%%%%%%%%%%%%%%%%%%%%%%%%%%%%%%%%%%%%%%%%%%%%%%%%%%%%%%%%%%%%%%%%%%%%%%%%%%
The abstract response geometry developed in the preceding sections applies to
any parameterized Einstein family satisfying the standing hypotheses. The
computations below serve a different purpose. They illustrate the resulting
finite-dimensional formulas on shooting-coordinate spaces and record two
conditional extensions to cohomogeneity-one settings.

The distinction is important: a shooting parameter manifold of one-sided
Einstein germs or propagated local solutions is not, in general, a
parameterized family of globally closing Einstein metrics. Accordingly, the
tensor constructed on such a shooting space is a parameter-response tensor.
It agrees with the response metric of the preceding theory only after a
separate argument identifies the parameter space, or a smooth subfamily of
it, with a parameterized family of globally defined normalized Einstein
metrics.
%%%%%%%%%%%%%%%%%%%%%%%%%%%%%%%%%%%%%%%%%%%%%%%%%%%%%%%%%%%%%%%%%%%%%%%%%%%%%%%%
\subsection{A two-parameter response tensor}
\label{subsec:two-parameter-response}
%%%%%%%%%%%%%%%%%%%%%%%%%%%%%%%%%%%%%%%%%%%%%%%%%%%%%%%%%%%%%%%%%%%%%%%%%%%%%%%%
Let \(P\) be a smooth two-dimensional parameter manifold of normalized
one-sided Einstein germs, or of Einstein solutions propagated to a prescribed
matching hypersurface. Choose local coordinates
\(x=(x^1,x^2)\)
on an open set \(U\subset P\), centred at \(p_0\), and equip \(U\) with the
auxiliary Euclidean metric
\(g^P=(dx^1)^2+(dx^2)^2.\)
Let
\(\Theta=(O_1,\ldots,O_N):U\longrightarrow\mathbb R^N\)
be a smooth family of scalar observations, and equip \(\mathbb R^N\) with
the weighted Euclidean metric
\[
h^{\mathcal Z}
=
\sum_{\alpha=1}^N w_\alpha\,dz_\alpha^2,
\qquad
w_\alpha>0.
\]
Define the parameter-response tensor
\[
g_R:=\Theta^*h^{\mathcal Z}.
\]
If \(U\) itself parameterizes a smooth family of globally defined normalized
Einstein metrics and the observation map is the one used in the abstract
response construction, then \(g_R\) is the response metric introduced
earlier. In the general shooting situation it measures only the
distinguishability of shooting variations.
\begin{theorem}[Explicit two-parameter response tensor]
\label{thm:explicit-two-parameter-response-metric}
Set
\[
J(p)=D\Theta(p)
=
\begin{pmatrix}
\partial_1O_1 & \partial_2O_1\\
\vdots & \vdots\\
\partial_1O_N & \partial_2O_N
\end{pmatrix},
\qquad
W=\operatorname{diag}(w_1,\ldots,w_N).
\]
In the coordinate frame \((\partial_1,\partial_2)\),
\([g_R]_x=J^{\mathsf T}WJ.\)
Writing
\[
A=\sum_{\alpha=1}^N w_\alpha(\partial_1O_\alpha)^2,
\qquad
B=\sum_{\alpha=1}^N w_\alpha(\partial_1O_\alpha)(\partial_2O_\alpha),
\]
and
\[
C=\sum_{\alpha=1}^N w_\alpha(\partial_2O_\alpha)^2,
\]
one has
\[
[g_R]_x=
\begin{pmatrix}
A&B\\
B&C
\end{pmatrix}.
\]
Moreover,
\[
\det[g_R]_x
=
\sum_{1\le \alpha<\beta\le N}
w_\alpha w_\beta
\left(
\partial_1O_\alpha\,\partial_2O_\beta
-
\partial_2O_\alpha\,\partial_1O_\beta
\right)^2.
\]
Consequently, at every \(p\in U\), the following are equivalent:
\begin{enumerate}
\item \(g_R\) is positive definite at \(p\);
\item \(D\Theta(p)\) has rank two;
\item at least one \(2\times2\) minor of \(D\Theta(p)\) is nonzero;
\item \(\det[g_R]_x(p)>0\).
\end{enumerate}
Because the chosen coordinates make
\(g^P=(dx^1)^2+(dx^2)^2\), the Gram operator has matrix
\([g_R]_x\). Its eigenvalues are
\[
\lambda_\pm
=
\frac{A+C\pm\sqrt{(A-C)^2+4B^2}}{2}.
\]
Equivalently, \(\sqrt{\lambda_\pm}\) are the singular values of the weighted
response operator
\[
W^{1/2}J:(T_pP,g^P)\longrightarrow\mathbb R^N
\]
with the standard Euclidean metric on the target. Hence, on the rank-two
locus,
\[
\sigma_{\min}=\sqrt{\lambda_-},
\qquad
\sigma_{\max}=\sqrt{\lambda_+},
\qquad
\kappa_R=\sqrt{\frac{\lambda_+}{\lambda_-}}.
\]
\end{theorem}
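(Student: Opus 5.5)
The plan is to compute everything in the coordinate frame and reduce each claim to finite-dimensional linear algebra.

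\emph{Step 1: the matrix of \(g_R\).} By definition \(g_R=\Theta^*h^{\mathcal Z}\), so \((g_R)(\partial_i,\partial_j)=h^{\mathcal Z}(d\Theta\,\partial_i,d\Theta\,\partial_j)\). Here \(d\Theta\,\partial_i\) is the \(i\)-th column of \(J\), and \(h^{\mathcal Z}\) has matrix \(W\). Hence \([g_R]_x=J^{\mathsf T}WJ\). Expanding the entries gives \(A\), \(B\) and \(C\) directly.

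\emph{Step 2: the determinant.} Write \(J^{\mathsf T}WJ=(W^{1/2}J)^{\mathsf T}(W^{1/2}J)\) and apply the Cauchy--Binet formula to the \(2\times N\) and \(N\times2\) factors. This expresses \(\det\) as the sum over pairs \(\alpha<\beta\) of squared \(2\times2\) minors of \(W^{1/2}J\). Each such minor equals \(\sqrt{w_\alpha w_\beta}\,(\partial_1O_\alpha\partial_2O_\beta-\partial_2O_\alpha\partial_1O_\beta)\), which yields the stated formula. Alternatively, one can use the Lagrange identity \(AC-B^2=\tfrac12\sum_{\alpha,\beta}w_\alpha w_\beta(\ldots)^2\), which is the same computation.

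\emph{Step 3: the equivalences.} Positive definiteness of \(g_R\) is equivalent to injectivity of \(D\Theta(p)\). This is the argument of Theorem~\ref{thm:complete-response-riemannian-metric}, with \(R=d\Theta\) and \(h^{\mathcal Z}\) positive definite. For a map from a \(2\)-dimensional space, injectivity means rank two, which holds exactly when some \(2\times2\) minor is nonzero. By Step 2 and \(w_\alpha>0\), the determinant is a sum of nonnegative terms, and it is positive exactly when some minor is nonzero. Together these give (1)\(\Leftrightarrow\)(2)\(\Leftrightarrow\)(3)\(\Leftrightarrow\)(4).

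\emph{Step 4: spectral claims.} Since \(g^P\) is the identity in these coordinates, Definition~\ref{def:response-gram-operator} gives \(G_R=[g_R]_x\). The characteristic polynomial \(\lambda^2-(A+C)\lambda+(AC-B^2)\) has the stated roots, with real discriminant \((A-C)^2+4B^2\ge0\). Because \([g_R]_x=(W^{1/2}J)^{\mathsf T}(W^{1/2}J)\), these eigenvalues are the squared singular values of \(W^{1/2}J\). On the rank-two locus \(\lambda_->0\) by Step 3, so \(\sigma_{\min}\), \(\sigma_{\max}\) and \(\kappa_R=\sigma_{\max}/\sigma_{\min}\) are as stated.

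No step poses a real obstacle. The only bookkeeping point is the Cauchy--Binet or Lagrange expansion in Step 2, where the diagonal terms \(\alpha=\beta\) cancel and the ordered pairs must be converted to unordered pairs \(\alpha<\beta\).
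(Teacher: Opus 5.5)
Your proposal is correct and follows essentially the same route as the paper: you use the coordinate form of the pullback \(J^{\mathsf T}WJ\), apply the two-column Cauchy--Binet formula to \(W^{1/2}J\), use positivity of the weights for the rank equivalences, and combine the characteristic-root formula with \(g^P\)-orthonormality of the coordinate frame for the spectral claims. Your explicit kernel argument for (1)\(\Leftrightarrow\)(2) and the Lagrange-identity alternative for the determinant are harmless additions that the paper leaves implicit.
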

\begin{proof}
The identity
\([g_R]_x=J^{\mathsf T}WJ\)
is the coordinate form of
\(g_R=\Theta^*h^{\mathcal Z}.\)
The displayed coefficients are the corresponding Gram coefficients.
For the determinant, apply the standard two-column Cauchy--Binet formula (see, for example, \cite{HornJohnson,GolubVanLoan}) to
\(W^{1/2}J\):
\[
\det(J^{\mathsf T}WJ)
=
\sum_{\alpha<\beta}
\det\!\left((W^{1/2}J)_{\{\alpha,\beta\}}\right)^2.
\]
Because all \(w_\alpha\) are positive, this determinant is positive exactly
when \(J\) has column rank two.
The eigenvalue formula is the characteristic-root formula for a real
symmetric \(2\times2\) matrix. Since the coordinate frame is orthonormal for
\(g^P\), these eigenvalues are precisely the eigenvalues of the relative Gram
operator, and the singular values of the weighted response matrix
\(W^{1/2}J\) are their positive square roots.
\end{proof}
\begin{corollary}[Two scalar response coordinates]
\label{cor:two-scalar-response-coordinates}
If \(N=2\) and
\[
J(p_0)=
\begin{pmatrix}
a&b\\
c&d
\end{pmatrix},
\]
then
\[
[g_R]_{x,p_0}
=
\begin{pmatrix}
w_1a^2+w_2c^2&w_1ab+w_2cd\\
w_1ab+w_2cd&w_1b^2+w_2d^2
\end{pmatrix}
\]
and
\(\det[g_R]_{x,p_0} = w_1w_2(ad-bc)^2.\)
Hence the two scalar observations define local coordinates at \(p_0\)
precisely when
\(ad-bc\neq0.\)
\end{corollary}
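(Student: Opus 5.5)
The plan is to specialize Theorem~\ref{thm:explicit-two-parameter-response-metric} to the case \(N=2\) and then use the inverse function theorem. First I would substitute the entries of \(J(p_0)\) into the coefficient formulas. With \(\partial_1O_1=a\), \(\partial_2O_1=b\), \(\partial_1O_2=c\), \(\partial_2O_2=d\), these give
\[
A=w_1a^2+w_2c^2,\qquad
B=w_1ab+w_2cd,\qquad
C=w_1b^2+w_2d^2 .
\]
Equivalently, one can read these off directly from the product \(J^{\mathsf T}WJ\) with \(W=\operatorname{diag}(w_1,w_2)\). This yields the displayed matrix.

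For the determinant, the Cauchy--Binet sum in the theorem has the single term \(\alpha=1\), \(\beta=2\). It equals \(w_1w_2(ad-bc)^2\). As a cross-check, this is also \(\det(J^{\mathsf T}WJ)=\det(J)^2\det W\), valid because \(J\) is square.

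For the final assertion, the two observations define local coordinates at \(p_0\) exactly when \(\Theta=(O_1,O_2):U\to\mathbb R^2\) is a local diffeomorphism at \(p_0\).
\begin{itemize}
\item If \(ad-bc\neq0\), then \(D\Theta(p_0)\) is invertible. The inverse function theorem then gives a neighbourhood on which \(\Theta\) is a diffeomorphism onto an open set, so \(\Theta\) is a chart. This also matches part~(4) of Theorem~\ref{thm:minimal-scalar-probe} with \(m=2\).
\item Conversely, if \(\Theta\) restricts to a chart near \(p_0\), its differential at \(p_0\) is invertible, so \(ad-bc\neq0\).
\end{itemize}
Since \(w_1w_2>0\), the condition \(ad-bc\neq0\) is equivalent to \(\det[g_R]_{x,p_0}>0\). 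By the equivalences of Theorem~\ref{thm:explicit-two-parameter-response-metric}, it is therefore also equivalent to positive definiteness of \(g_R\) at \(p_0\).

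I expect no real obstacle. The only point requiring care is interpreting ``define local coordinates'' as ``restrict to a diffeomorphism onto an open subset of \(\mathbb R^2\) near \(p_0\)''. With that reading, the necessity direction is the elementary fact that a diffeomorphism has an invertible differential.
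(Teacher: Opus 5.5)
Your proposal is correct and matches what the paper intends: the corollary is stated without a separate proof, as a direct specialization of Theorem~\ref{thm:explicit-two-parameter-response-metric}. That is, you substitute the entries of \(J(p_0)\) into \(A,B,C\), note that the Cauchy--Binet sum reduces to its single term \(w_1w_2(ad-bc)^2\), and apply the inverse function theorem, which is exactly your route. Your explicit reading of ``define local coordinates'' as ``restricts to a diffeomorphism onto an open subset of \(\mathbb R^2\)'' is the right one, since the converse direction fails if only a homeomorphism is meant.
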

%%%%%%%%%%%%%%%%%%%%%%%%%%%%%%%%%%%%%%%%%%%%%%%%%%%%%%%%%%%%%%%%%%%%%%%%%%%%%%%%
\subsection{A four-channel interval illustration}
\label{subsec:four-channel-interval}
%%%%%%%%%%%%%%%%%%%%%%%%%%%%%%%%%%%%%%%%%%%%%%%%%%%%%%%%%%%%%%%%%%%%%%%%%%%%%%%%
The next statement is deliberately finite dimensional. It concerns only the
displayed interval matrix and does not claim that these intervals constitute
a validated enclosure of an exact Jacobian arising from an Einstein
boundary-value problem.
\begin{proposition}[Four-channel interval response calculation]
\label{prop:certified-four-channel-response-metric}
Let \(\mathbf J\) be the interval matrix with centre
\[
J_{\mathrm c}=
\begin{pmatrix}
-0.30787068218 & -0.18575144897\\
 \phantom{-}0.50424122365 & \phantom{-}0.30423065863\\
-0.14093625628 & -0.084561820768\\
 \phantom{-}0.69791494720 & \phantom{-}0.41874838795
\end{pmatrix}
\]
and radius matrix
\[
J_{\mathrm{rad}}=
\begin{pmatrix}
3.45\mathbin{\cdot}10^{-7} & 2.07\mathbin{\cdot}10^{-7}\\
8.29\mathbin{\cdot}10^{-7} & 5.00\mathbin{\cdot}10^{-7}\\
3.82\mathbin{\cdot}10^{-7} & 2.31\mathbin{\cdot}10^{-7}\\
1.03\mathbin{\cdot}10^{-6} & 6.18\mathbin{\cdot}10^{-7}
\end{pmatrix}.
\]
Here \(J\in\mathbf J\) means
\[
|J_{\alpha j}-(J_{\mathrm c})_{\alpha j}|
\le
(J_{\mathrm{rad}})_{\alpha j}
\]
entrywise.
For every such real matrix set
\(G=J^{\mathsf T}J.\)
Then
\[
G\in
\begin{pmatrix}
[0.8559892765,\,0.8559944643]
&
[0.5147600907,\,0.5147632101]
\\
[0.5147600907,\,0.5147632101]
&
[0.3095598706,\,0.3095617462]
\end{pmatrix},
\]
and a cancellation-safe Cauchy--Binet evaluation gives
\(\det G \in [1.9743508816,\,1.9829741725] \mathbin{\cdot}10^{-6}.\)
Consequently every \(J\in\mathbf J\) has column rank two, and
\[
\lambda_{\min}(G)
\ge
1.6939130553\mathbin{\cdot}10^{-6},
\qquad
\lambda_{\max}(G)
\le
1.1655562105.
\]
Equivalently,
\[
\sigma_{\min}(J)
\ge
1.3015041511\mathbin{\cdot}10^{-3}
>
5\mathbin{\cdot}10^{-4},
\]
and
\(\kappa(J)\le829.509.\)
\end{proposition}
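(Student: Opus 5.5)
The proof will be a rigorous interval-arithmetic computation organized around the two-column structure already exploited in Theorem~\ref{thm:explicit-two-parameter-response-metric}, with unit weights $W=I$. The first step is to enclose the entries of $G=J^{\mathsf T}J$. For each $J\in\mathbf J$ write $J_{\alpha j}=(J_{\mathrm c})_{\alpha j}+\delta_{\alpha j}$ with $|\delta_{\alpha j}|\le(J_{\mathrm{rad}})_{\alpha j}$. Then
\[
G_{jk}=\sum_{\alpha=1}^4 J_{\alpha j}J_{\alpha k}.
\]
I will bound each product $J_{\alpha j}J_{\alpha k}$ by the standard interval product of the two entry intervals, using outward rounding, and sum. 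Because none of the entry intervals contains $0$, each product interval is just the hull of the endpoint products. The sums then give the displayed enclosures $A=G_{11}$, $B=G_{12}$, $C=G_{22}$.

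The second step, the determinant, is the main obstacle. The columns of $J_{\mathrm c}$ are nearly parallel: the ratio of the second to the first column is about $0.6034$ in every row. As a result, $\det G$ is of order $10^{-6}$, while $AC$ and $B^2$ are of order $0.26$. Naive evaluation of $AC-B^2$ from the entry enclosures would lose all significance, since the widths are about $10^{-6}$. I will therefore use the Cauchy--Binet identity from Theorem~\ref{thm:explicit-two-parameter-response-metric}:
\[
\det G=\sum_{\alpha<\beta}M_{\alpha\beta}^2,
\qquad
M_{\alpha\beta}=J_{\alpha1}J_{\beta2}-J_{\alpha2}J_{\beta1}.
\]
For each of the six minors I will expand around the centre:
\[
M_{\alpha\beta}=M^{\mathrm c}_{\alpha\beta}+(\text{terms linear in }\delta)+(\text{terms quadratic in }\delta).
\]
Here $M^{\mathrm c}_{\alpha\beta}$ is computed in high-precision or exact rational arithmetic. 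The perturbation is bounded by
\[
\sum |J_{\mathrm c}|\,|\delta|+\sum|\delta||\delta|,
\]
which is of order $10^{-6}$ rather than relative to $0.26$. This yields an interval for each $M_{\alpha\beta}$. Squaring it (taking care if an interval straddles $0$) and summing produces the stated enclosure $[1.9743508816,\,1.9829741725]\cdot10^{-6}$. Positivity of this lower bound gives column rank two, by the equivalence in Theorem~\ref{thm:explicit-two-parameter-response-metric}.

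The third step bounds the eigenvalues. Writing $T=A+C$ and $D=\det G$, I will use
\[
\lambda_{\max}\le T
\quad\text{and}\quad
\lambda_{\min}=\frac{D}{\lambda_{\max}}\ge \frac{D_{\mathrm{low}}}{T_{\mathrm{up}}}.
\]
With $T_{\mathrm{up}}\approx1.16555$ and $D_{\mathrm{low}}$ as above, this reproduces $\lambda_{\min}\ge1.6939\cdot10^{-6}$ and $\lambda_{\max}\le1.16556$. Since $\sigma=\sqrt{\lambda}$ by Theorem~\ref{thm:explicit-two-parameter-response-metric}, taking square roots gives the bound on $\sigma_{\min}$, and
\[
\kappa=\sqrt{\frac{\lambda_{\max}}{\lambda_{\min}}}\le\sqrt{\frac{T_{\mathrm{up}}^2}{D_{\mathrm{low}}}}=\frac{T_{\mathrm{up}}}{\sqrt{D_{\mathrm{low}}}}\approx829.5.
\]
All operations will be carried out with directed rounding (or exact rationals) in Sage, so that every displayed constant is a certified outer bound. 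The only delicate point is ensuring that the minor enclosures are computed in the centred, cancellation-safe form.
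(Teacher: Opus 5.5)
Your proposal is correct and follows the paper's proof essentially step for step. You enclose the entries of $G$ with outward rounding, and you use the Cauchy--Binet sum of squared $2\times2$ minors to avoid the cancellation in $G_{11}G_{22}-G_{12}^2$. The eigenvalue, singular-value and condition-number bounds then come from $\lambda_{\max}\le\operatorname{tr}G$, $\lambda_{\min}=\det G/\lambda_{\max}\ge\inf\det G/\sup\operatorname{tr}G$ and $\kappa^2\le(\sup\operatorname{tr}G)^2/\inf\det G$, exactly as in the paper. The paper's 256-bit ball arithmetic carries out automatically the centred radius propagation you describe for the minors.

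One descriptive remark is inaccurate, though it is not used in the argument. The column ratio is about $0.6033$ in rows 1--2 but about $0.6000$ in rows 3--4. It is this spread of roughly $3\cdot10^{-3}$ between the two row pairs that makes $\det G$ of order $10^{-6}$. The within-pair minors $M_{12}$ and $M_{34}$ are only of order $10^{-7}$, and their enclosures do straddle $0$, so your caution about squaring such intervals is warranted.
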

\begin{proof}
The enclosures are obtained by outward-rounded RealBall arithmetic at
\(256\)-bit precision.
Direct interval evaluation of
\(G_{11}G_{22}-G_{12}^2\)
suffers from cancellation. Instead, one uses the algebraically identical
Cauchy--Binet expression
\[
\det(J^{\mathsf T}J)
=
\sum_{1\le\alpha<\beta\le4}
\left(
J_{\alpha1}J_{\beta2}
-
J_{\alpha2}J_{\beta1}
\right)^2.
\]
This yields the displayed strictly positive determinant enclosure.
Since \(G\) is positive semidefinite and
\[
\lambda_{\max}(G)\le\operatorname{tr}G,
\]
one has
\[
\lambda_{\min}(G)
=
\frac{\det G}{\lambda_{\max}(G)}
\ge
\frac{\inf\det G}{\sup\operatorname{tr}G}.
\]
The singular-value lower bound follows by taking square roots. For the
condition number, use
\[
\kappa(J)^2
=
\frac{\lambda_{\max}(G)}{\lambda_{\min}(G)}
\le
\frac{(\sup\operatorname{tr}G)^2}{\inf\det G},
\]
which gives the stated numerical upper bound.
\end{proof}
\begin{remark}[Scope of the interval calculation]
Proposition~\ref{prop:certified-four-channel-response-metric} is a rigorous
statement about every real matrix contained in the prescribed interval
matrix \(\mathbf J\). The accompanying Sage program checks this
finite-dimensional assertion by outward-rounded interval arithmetic.

The proposition does not establish that \(\mathbf J\) encloses an exact
response Jacobian produced by an Einstein ODE computation. Such an upstream
validation would require independent control of the ODE propagation,
event-location, differentiation and observation errors.

If a numerical calculation associates \(\mathbf J\) with two shooting
coordinates, the resulting tensor measures observability on that
shooting-coordinate space. It must not be interpreted as a metric on an
Einstein moduli space without an additional restriction argument.
\end{remark}

%%%%%%%%%%%%%%%%%%%%%%%%%%%%%%%%%%%%%%%%%%%%%%%%%%%%%%%%%%%%%%%%%%%%%%%%%%%%%%%%
%%%%%%%%%%%%%%%%%%%%%%%%%%%%%%%%%%%%%%%%%%%%%%%%%%%%%%%%%%%%%%%%%%%%%%%%%%%%%%%%

The preceding examples emphasize algebraic verification and conditional
cohomogeneity-one applications.  The final section turns to a complementary
exact model in which the parameter space, harmonic observations, response
metric, reconstruction map, and conditioning can all be computed globally.

\section{An Explicit Model of Response Geometry: Marked Flat Tori}
\label{sec:flat-tori-response}
%%%%%%%%%%%%%%%%%%%%%%%%%%%%%%%%%%%%%%%%%%%%%%%%%%%%%%%%%%%%%%%%%%%%%%%%%%%%%%%%

We conclude with an exact model in which the response geometry is completely
explicit. The symmetric-space description of marked unit-volume flat two-tori
is classical; see, for example, \cite{Wolf}.  We use this classical family of
normalized Ricci-flat Einstein metrics to exhibit a particular three-channel
response system in closed form: three elementary circle-valued harmonic probes
reconstruct the marked metric. The example illustrates both
complete response and a point that is less visible in the abstract theory:
injectivity of the response operator need not imply uniform conditioning.

\subsection{The marked family and harmonic observations}

Let \(T^2=\mathbb R^2/\mathbb Z^2\) with its standard marking. For
\(G\in\operatorname{Sym}_2^+(\mathbb R)\), let \(g_G\) be the constant metric
with matrix \(G\). It is flat, hence Ricci-flat. Since
\(\operatorname{Vol}(T^2,g_G)=\sqrt{\det G}\), we remove the homothetic freedom
by imposing unit volume and set
\[
 \mathcal S=\{G\in\operatorname{Sym}_2^+(\mathbb R):\det G=1\}
 \cong SL(2,\mathbb R)/SO(2).
\]
This is a smooth two-dimensional manifold with
$$T_G\mathcal S=\{H\in\operatorname{Sym}_2(\mathbb R):
\operatorname{tr}(G^{-1}H)=0\}$$. 
We work on the marked space \(\mathcal S\),
not on its \(SL(2,\mathbb Z)\)-quotient.

\smallskip

For \(m=(m_1,m_2)\in\mathbb Z^2\), define
\(u_m(x)=m_1x^1+m_2x^2\pmod{\mathbb Z}\), with target
\(S^1=\mathbb R/\mathbb Z\) carrying its flat metric. Since \(du_m\) is
parallel, \(u_m:(T^2,g_G)\to S^1\) is harmonic for every \(G\in\mathcal S\).
Its energy is
\[
 E_m(G)=\frac12\int_{T^2}|du_m|_{g_G}^2\,dV_{g_G}
       =\frac12\,m^TG^{-1}m.
\]
Choose \(m_1=(1,0)\), \(m_2=(0,1)\), and \(m_3=(1,1)\), and write
\[
 \Theta(G)=\bigl(E_{(1,0)}(G),E_{(0,1)}(G),E_{(1,1)}(G)\bigr)\in\mathbb R^3,
 \qquad R_G=D\Theta_G.
\]
Thus the observation bundle is the trivial Euclidean bundle
\(\mathcal S\times\mathbb R^3\), and \(g_R=\Theta^*g_{\mathbb R^3}\).

\begin{Thm}[Exact reconstruction and complete response]
\label{thm:flat-torus-reconstruction}
The map \(\Theta:\mathcal S\to\mathbb R^3\) is injective and
\(D\Theta_G\) is injective for every \(G\in\mathcal S\). In particular,
\(g_R\) is a Riemannian metric on all of \(\mathcal S\).
\end{Thm}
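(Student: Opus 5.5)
The proof is an explicit linear-algebra computation. It rests on one observation: the three energies are linear in the entries of \(G^{-1}\). Write
\[
G^{-1}=\begin{pmatrix} a&b\\ b&c\end{pmatrix}.
\]
Since \(\det G=1\), the inverse \(G^{-1}\) is again a unit-determinant positive-definite matrix, so \(ac-b^2=1\). The energy formula \(E_m(G)=\tfrac12 m^TG^{-1}m\) then gives
\[
E_{(1,0)}=\tfrac12 a,\qquad E_{(0,1)}=\tfrac12 c,\qquad E_{(1,1)}=\tfrac12(a+2b+c).
\]
Thus \(\Theta=\Lambda\circ\iota\), where \(\iota(G)=G^{-1}\) and \(\Lambda:\operatorname{Sym}_2(\mathbb R)\to\mathbb R^3\) is the linear map \((a,b,c)\mapsto\tfrac12(a,\,c,\,a+2b+c)\).

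\textbf{Global injectivity.} The matrix of \(\Lambda\) is lower-triangular up to reordering of coordinates, so \(\Lambda\) is invertible. Explicitly, from \(\Theta=(\theta_1,\theta_2,\theta_3)\) one recovers
\[
a=2\theta_1,\qquad c=2\theta_2,\qquad b=\theta_3-\theta_1-\theta_2 .
\]
Hence \(G^{-1}\), and therefore \(G\), is determined by \(\Theta(G)\). This gives injectivity of \(\Theta\) on all of \(\mathcal S\). It also yields the exact reconstruction formula \(G=\bigl(\Lambda^{-1}\Theta(G)\bigr)^{-1}\), which I would record for later use.

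\textbf{Injectivity of the differential.} By the chain rule, \(D\Theta_G=\Lambda\circ D\iota_G\), and \(D\iota_G(H)=-G^{-1}HG^{-1}\). The map \(H\mapsto -G^{-1}HG^{-1}\) is a linear automorphism of \(\operatorname{Sym}_2(\mathbb R)\), with inverse \(K\mapsto -GKG\). Restricted to \(T_G\mathcal S\) it is therefore injective, and composing with the isomorphism \(\Lambda\) preserves injectivity. Alternatively, \(\iota\) is an involutive diffeomorphism of \(\mathcal S\) and \(\Lambda\) is a linear injection, so \(\Theta\) is an injective immersion.

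\textbf{Conclusion.} Since \(R_G=D\Theta_G\) is injective at every \(G\), every point lies in \(S_{\mathrm{comp}}\). Theorem~\ref{thm:complete-response-riemannian-metric} then shows that \(g_R=\Theta^*g_{\mathbb R^3}\) is a Riemannian metric on all of \(\mathcal S\).

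There is no real obstacle here. The only points needing care are, first, verifying that the energy formula indeed yields \(\tfrac12 m^TG^{-1}m\) under the unit-volume normalization, which I take as given from the statement, and second, noting that injectivity uses only linearity in \(G^{-1}\), not the constraint \(\det G=1\). The constraint matters only in that the image of \(\Theta\) lies on the hyperboloid \(ac-b^2=1\).
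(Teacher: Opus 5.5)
Your proposal is correct and follows the paper's proof. Global injectivity comes from the linear reconstruction $a=2\theta_1$, $c=2\theta_2$, $b=\theta_3-\theta_1-\theta_2$ of $G^{-1}$. Injectivity of $D\Theta_G$ comes from $H\mapsto -G^{-1}HG^{-1}$ followed by the same invertible linear map; the paper checks this entrywise on $K=G^{-1}HG^{-1}$ (giving $k_{11}=k_{22}=k_{12}=0$), which is the same argument written out in coordinates.
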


\begin{proof}
Writing
\(G^{-1}=\left(\begin{smallmatrix}A&B\\ B&C\end{smallmatrix}\right)\), the
three energies give
\[
 A=2E_{(1,0)},\qquad C=2E_{(0,1)},\qquad
 B=E_{(1,1)}-E_{(1,0)}-E_{(0,1)}.
\]
Hence \(\Theta\) reconstructs \(G^{-1}\), and therefore \(G\), uniquely.
Moreover,
\[
DE_m(G)[H]=-\frac12m^TG^{-1}HG^{-1}m.
\]
If all three derivatives vanish and
\(K=G^{-1}HG^{-1}=(k_{ij})\), the first two probes give
\(k_{11}=k_{22}=0\), while the third gives \(2k_{12}=0\). Thus \(H=0\).
\end{proof}

The theorem separates two notions used throughout the paper: injectivity of
\(D\Theta\) gives first-order observability, whereas injectivity of \(\Theta\)
gives the stronger global reconstruction statement.

\subsection{Explicit response metric and conditioning}

Use upper-half-plane coordinates \(\tau=x+iy\), \(y>0\), with
\[
 G_\tau=\frac1y
 \begin{pmatrix}1&x\\ x&x^2+y^2\end{pmatrix},
 \qquad
 G_\tau^{-1}=\frac1y
 \begin{pmatrix}x^2+y^2&-x\\ -x&1\end{pmatrix}.
\]
Then
\[
 \Theta(x,y)=\frac1{2y}
 \bigl(x^2+y^2,\ 1,\ (x-1)^2+y^2\bigr).
\]
With respect to \((\partial_x,\partial_y)\), the response matrix is
\[
 \mathsf R(x,y)=
 \begin{pmatrix}
 x/y & (y^2-x^2)/(2y^2)\\
 0 & -1/(2y^2)\\
 (x-1)/y & \bigl(y^2-(x-1)^2\bigr)/(2y^2)
 \end{pmatrix},
 \qquad \mathsf G_R=\mathsf R^T\mathsf R.
\]
For later use it is enough to record its determinant:
\[
 \det\mathsf G_R
 =\frac{x^2+(x-1)^2+(x^2-x+y^2)^2}{4y^6}>0.
\]
This independently verifies complete response everywhere. The standard
symmetric-space metric on \(\mathcal S\) is
\(g_{\rm sym}=(dx^2+dy^2)/y^2\), and \(g_R\) is not generally a constant
multiple of it because \(g_R\) remembers the selected marked probes. At the
square torus \(\tau=i\),
\(g_R=dx^2+\frac12dy^2\), while at the hexagonal torus
\(\tau=\frac12+\frac{\sqrt3}{2}i\),
\[
 g_R=\frac23(dx^2+dy^2)=\frac12g_{\rm sym}.
\]
Thus the three-channel response is isotropic at the hexagonal torus.

To measure conditioning intrinsically, use \(g_{\rm sym}\) on the domain and
the Euclidean metric on the observation space. At the square torus the
response condition number is \(\sqrt2\), while at the hexagonal torus it is
\(1\). For fixed \(x\) and \(y\to\infty\), the eigenvalues
\(0<\lambda_-\le\lambda_+\) of \(\mathsf G_R\) satisfy
\[
 \lambda_+=\frac12+O(y^{-2}),\qquad
 \lambda_-=\frac1{2y^2}+O(y^{-4}),
 \qquad
 \kappa_R(x,y)=y+O(y^{-1}).
\]
Hence the response remains complete toward the cusp but becomes increasingly
ill-conditioned. This gives an exact example of the distinction between rank
and quantitative observability.

\subsection{Global response geometry}

Set \((p,q,r)=\Theta(G)\). The reconstruction formula and
\(\det G^{-1}=1\) give the single relation
\[
 4pq-(r-p-q)^2=1.
\]
Under the orthogonal change of observation coordinates
\[
 U=\frac{p+q+r}{\sqrt3},\qquad
 V=\frac{p-q}{\sqrt2},\qquad
 W=\frac{p+q-2r}{\sqrt6},
\]
this becomes \(U^2-2V^2-2W^2=1\). Since \(p,q,r>0\), the image lies on the
upper sheet \(U>0\). Conversely, writing \(\rho^2=V^2+W^2\), one has
\(U=\sqrt{1+2\rho^2}\). In each inverse coordinate \(p,q,r\), the radial
linear term has norm at most \(\sqrt{2/3}\,\rho\), whereas
\(U/\sqrt3>\sqrt{2/3}\,\rho\). Hence \(p,q,r>0\) on the entire upper sheet.
The reconstruction formula then gives a symmetric matrix \(G^{-1}\) with
determinant one and first diagonal entry \(2p>0\); hence Sylvester's criterion
makes it positive definite. It is therefore the inverse of a unique
\(G\in\mathcal S\).

\begin{Prop}[Hyperboloid realization]
\label{prop:flat-torus-hyperboloid}
The observation map is a smooth embedding of \(\mathcal S\) onto
\[
 \mathcal H_R=\{(U,V,W):U^2-2V^2-2W^2=1,\ U>0\},
\]
and \(g_R\) is the Euclidean metric induced on \(\mathcal H_R\).
\end{Prop}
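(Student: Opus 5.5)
The plan has three steps: show that \(\Theta\) lands on \(\mathcal H_R\), that it is a bijection onto \(\mathcal H_R\), and that it is an immersion; the metric statement then follows formally.

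For the first step, the reconstruction formula of Theorem~\ref{thm:flat-torus-reconstruction} together with \(\det G^{-1}=1\) gives \(4pq-(r-p-q)^2=1\) on \(\Theta(\mathcal S)\). I will rewrite this in the coordinates \((U,V,W)\) and check it becomes \(U^2-2V^2-2W^2=1\). This is a direct polynomial identity, verified by expanding both quadratic forms in \((p,q,r)\). Positivity of the energies gives \(U>0\), so \(\Theta(\mathcal S)\subset\mathcal H_R\).

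For the second step, injectivity is part of Theorem~\ref{thm:flat-torus-reconstruction}. Surjectivity onto \(\mathcal H_R\) is the discussion preceding the proposition. There one checks that every point of the upper sheet has \(p,q,r>0\), using that the radial linear part has norm at most \(\sqrt{2/3}\,\rho\) while \(U/\sqrt3=\sqrt{(1+2\rho^2)/3}>\sqrt{2/3}\,\rho\). The reconstructed symmetric matrix then has determinant \(1\) and positive \((1,1)\) entry, so it equals \(G^{-1}\) for a unique \(G\in\mathcal S\). This yields an explicit inverse \(\Theta^{-1}:\mathcal H_R\to\mathcal S\), given by the linear reconstruction formula followed by matrix inversion. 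It is the restriction of a smooth map on an open set of \(\mathbb R^3\), hence smooth on the submanifold \(\mathcal H_R\).

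For the third step, \(\mathcal H_R\) is a smooth embedded surface, being a regular level set of \(U^2-2V^2-2W^2\) with nonvanishing gradient for \(U>0\). \(\Theta\) is a smooth injective immersion, since \(D\Theta\) is injective everywhere by Theorem~\ref{thm:flat-torus-reconstruction}. Its image is all of \(\mathcal H_R\), and its inverse is continuous (indeed smooth). So \(\Theta\) is a homeomorphism onto its image and hence a smooth embedding, in fact a diffeomorphism \(\mathcal S\to\mathcal H_R\).

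The metric statement is then immediate. Since \(g_R=\Theta^*g_{\mathbb R^3}\) and the change \((p,q,r)\mapsto(U,V,W)\) is orthogonal, \(g_R\) is the pullback of the Euclidean metric \(dU^2+dV^2+dW^2\) restricted to \(\mathcal H_R\). In other words, the diffeomorphism is an isometry from \((\mathcal S,g_R)\) to \(\mathcal H_R\) with its induced metric.

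I expect the only real obstacle to be bookkeeping. The first check is that the quadratic identity holds with exactly the coefficients \(2\) on \(V^2\) and \(W^2\). I would verify this by computing \(4pq-(r-p-q)^2\) as a quadratic form and diagonalizing it in the orthonormal basis \((1,1,1)/\sqrt3,\ (1,-1,0)/\sqrt2,\ (1,1,-2)/\sqrt6\). This basis must be eigenvectors with eigenvalues \(1,-2,-2\); the invariant-under-swapping-\(p,q\) structure makes this plausible. The second check is that orthogonality of the coordinate change is genuinely needed, so that the Euclidean metric is preserved.
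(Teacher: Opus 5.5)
Your proposal is correct and follows the same route as the paper: the relation $4pq-(r-p-q)^2=1$ rewritten in the orthogonal coordinates $(U,V,W)$, and surjectivity from positivity of $p,q,r$ on the upper sheet together with Sylvester's criterion. The remaining steps also match: a smooth inverse from linear reconstruction of $G^{-1}$ followed by matrix inversion, and the metric identification from orthogonality of $(p,q,r)\mapsto(U,V,W)$ together with $g_R=\Theta^*g_{\mathbb R^3}$; your eigenvalue check ($1,-2,-2$ on the given orthonormal basis) and the regular-level-set argument only make explicit what the paper leaves implicit.
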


\begin{proof}
Surjectivity onto \(\mathcal H_R\) follows from the preceding reconstruction;
the inverse is smooth because it is obtained by linear reconstruction of
\(G^{-1}\) followed by matrix inversion. Since the change
\((p,q,r)\mapsto(U,V,W)\) is orthogonal and
\(g_R=\Theta^*g_{\mathbb R^3}\), the metric is the Euclidean first fundamental
form of the hyperboloid.
\end{proof}

\begin{Bem}[Riemannian completeness]
The hyperboloid \(\mathcal H_R\) is a closed embedded submanifold of
\(\mathbb R^3\) and is therefore complete with its induced Riemannian metric.
Proposition~\ref{prop:flat-torus-hyperboloid} consequently shows that
\((\mathcal S,g_R)\) is complete.

This is Riemannian completeness and should not be confused with
\emph{complete response}, which means that the response operator is injective
at every point. The present example has both properties. Neither property
implies the other in the general framework developed above.
\end{Bem}

Writing \(V=\rho\cos\vartheta\), \(W=\rho\sin\vartheta\) gives
\(U=\sqrt{1+2\rho^2}\) and therefore
\[
 g_R=\frac{1+6\rho^2}{1+2\rho^2}\,d\rho^2+\rho^2d\vartheta^2,
 \qquad
 K_R(\rho)=\frac{4}{(1+6\rho^2)^2}.
\]
Thus the response metric is positively curved, with curvature decreasing from
\(4\) at the apex to \(0\) at infinity. The apex is characterized by
\(p=q=r=1/\sqrt3\) and corresponds precisely to the hexagonal torus, explaining
the isotropy found above.

Finally, the three probes are tied to the chosen marking. Under
\(SL(2,\mathbb Z)\), lattice covectors transform together with the metric, so
the full collection \(\{E_m:m\in\mathbb Z^2\}\) is natural, whereas the
ordered triple used here is not invariant. We therefore make no claim that
this particular response metric descends to the unmarked moduli space.

The example gives a compact exact model of the general hierarchy
\[
 \text{harmonic observations}\Longrightarrow
 \text{response metric}\Longrightarrow
 \text{first-order observability},
\]
with the additional special feature of global reconstruction. It also shows
that complete response, completeness of the induced response metric, and
uniform quantitative conditioning are logically distinct properties.

%%%%%%%%%%%%%%%%%%%%%%%%%%%%%%%%%%%%%%%%%%%%%%%%%%%%%%%%%%%%%%%%%%%%%%%%%%%%%%%%

\end{document}